\renewcommand{\p@enumii}{}
\def\@enum@{\list{\csname label\@enumctr\endcsname}%
          {\usecounter{\@enumctr}\def\makelabel##1{
\normalfont\ignorespaces\emph{{##1}~}}
\setlength{\labelsep}{3pt}
\setlength{\parsep}{0pt}
\setlength{\itemsep}{0pt}
\setlength{\leftmargin}{0pt}
\setlength{\labelwidth}{0pt}
\setlength{\listparindent}{\parindent}
\setlength{\itemsep}{0pt}
\setlength{\itemindent}{0pt}
\topsep=3pt plus 1pt minus 1 pt}}
\renewcommand{\epsilon}{\ensuremath{\varepsilon}}
\renewcommand{\phi}{\ensuremath{\varphi}}
\renewcommand{\to}{\ensuremath{\longrightarrow}}
\renewcommand{\mapsto}{\ensuremath{\longmapsto}}
\newcommand{\R}{\ensuremath{\mathbb R}}
\newcommand{\N}{\ensuremath{\mathbb N}}
\newcommand{\Z}{\ensuremath{\mathbb Z}}
\newcommand{\sn}[1][n]{\ensuremath{S_{{#1}}}}
\DeclareRobustCommand*{\up}[1]{\textsuperscript{#1}}
\renewcommand{\th}{\up{th}}
\newcommand{\im}[1]{\ensuremath{\operatorname{\text{Im}}\left({#1}\right)}}
\def\@map#1#2[#3]{\mbox{$#1 \colon\thinspace #2 \to #3$}}
\def\map#1#2{\@ifnextchar [{\@map{#1}{#2}}{\@map{#1}{#2}[#2]}}
\newcommand{\brak}[1]{\ensuremath{\left\{ #1 \right\}}}
\newcommand{\ang}[1]{\ensuremath{\left\langle #1\right\rangle}}
\newcommand{\set}[2]{\ensuremath{\left\{#1 \,\mid\, #2\right\}}}
\newcommand{\setang}[2]{\ensuremath{\ang{#1 \,\mid\, #2}}}
\newcommand{\ord}[1]{\ensuremath{\left\lvert #1\right\rvert}}
\newtheoremstyle{theoremm}{}{}{\itshape}{}{\scshape}{.}{ }{}
\theoremstyle{theoremm}
\newtheorem{thm}{Theorem}
\newtheorem{lem}[thm]{Lemma}
\newtheorem{prop}[thm]{Proposition}
\newtheorem{cor}[thm]{Corollary}
\newtheoremstyle{reptheorem}{}{}{\itshape}{}{\scshape}{}{ }{\thmname{#1}\mathrm{#3}}
\theoremstyle{reptheorem}
\newtheoremstyle{remark}{}{}{}{}{\scshape}{.}{ }{}
\theoremstyle{remark}
\newtheorem*{defn}{Definition}
\newtheorem{rem}[thm]{Remark}
\newtheoremstyle{comment}{}{}{\bfseries}{}{\bfseries}{:}{ }{}
\theoremstyle{comment}
\newcommand{\reth}[1]{Theorem~\protect\ref{th:#1}}
\newcommand{\relem}[1]{Lemma~\protect\ref{lem:#1}}
\newcommand{\repr}[1]{Proposition~\protect\ref{prop:#1}}
\newcommand{\reco}[1]{Corollary~\protect\ref{cor:#1}}
\newcommand{\resec}[1]{Section~\protect\ref{sec:#1}}
\newcommand{\rerem}[1]{Remark~\protect\ref{rem:#1}}
\newcommand{\req}[1]{equation~(\protect\ref{eq:#1})}
\begin{document}

\title{Bieberbach groups and flat manifolds with finite abelian holonomy from Artin braid groups} 

\author{OSCAR~OCAMPO~\\
Departamento de Matem\'atica - Instituto de Matem\'atica e Estat\'istica,\\
Universidade Federal da Bahia,\\
CEP:~40170-110 - Salvador - Ba - Brazil.\\
e-mail:~\url{oscaro@ufba.br}
}

\maketitle

\begin{abstract}
\noindent
\emph{
Let $n\geq 3$. In this paper we show that for any finite abelian subgroup $G$ of $\sn$ the crystallographic group $B_n/[P_n,P_n]$ has Bieberbach subgroups $\Gamma_{G}$ with holonomy group $G$.  
Using this approach we obtain an explicit description of the holonomy representation of the Bieberbach group $\Gamma_{G}$. As an application,  when the holonomy group is cyclic of odd order, we study the holonomy representation of $\Gamma_{G}$ and determine the existence of Anosov diffeomorphisms and K\"ahler geometry of the flat manifold ${\cal X}_{\Gamma_{G}}$ with fundamental group the Bieberbach group $\Gamma_{G}\leq B_n/[P_n,P_n]$.
}
 \end{abstract}
 
\let\thefootnote\relax\footnotetext{2010 \emph{Mathematics Subject Classification}. Primary: 20F36, 20H15; Secondary: 57N16.

\emph{Key Words and Phrases}. Braid group, Bieberbach group, flat manifold, Anosov diffeomorphism, K\"ahler manifold}

\section{Introduction}\label{sec:intro}

A surprising connection  was given in \cite{GGO} between crystallographic groups with quotients of the Artin braid group $B_n$ by the commutator subgroup of the pure Artin braid group $P_n$, $B_n/[P_n,P_n]$. 
That results were extended to the generalised braid group associated to an arbitrary complex reflection group and also was studied the existence of K\"ahler structures on flat manifolds arising from subquotients of these groups, see \cite{Ma}.
Was proved in \cite[Proposition~1]{GGO} that, for $n\geq 3$, $B_n/[P_n,P_n]$ is a crystallographic group of dimension $n(n-1)/2$, with holonomy group the symmetric group $S_n$, such that the torsion of the quotient $B_n/[P_n,P_n]$ is equal to the odd torsion of $\sn$ (see \cite[Corollary~4]{GGO}). As a consequence $\widetilde{H}=\frac{\sigma^{-1}(H)}{[P_n,\, P_n]}$ is a Bieberbach subgroup of $B_n/[P_n, P_n]$ with holonomy group $H$ if and only if $H$ is a 2-subgroup of $S_n$, where $\sigma\colon B_n \rightarrow S_n$ is  the natural projection, see \cite[Corollary~13~and~Theorem~20]{GGO}. 
In \cite{OR} was studied Bieberbach groups of the form $\widetilde{H}$ arising from Artin braid groups with cyclic holonomy group $H=\Z_{2^d}$, the authors computed the center of the Bieberbach group $\widetilde{H}$, decomposed its holonomy representation in irreps, and with this information the authors were able to determine whether the related flat manifold admits Anosov diffeomorphism and/or K\"ahler geometry (the last one for even dimension).

In this paper, we show that the quotient group $B_n/[P_n,P_n]$ has Bieberbach subgroups of dimension $n(n-1)/2$ with holonomy a finite abelian subgroup of $\sn$. 
Our initial motivation emanates from the fact that is not possible to construct Bieberbach subgroups of $B_n/[P_n,P_n]$ just taking $\widetilde{H}=\frac{\sigma^{-1}(H)}{[P_n,\, P_n]}$ if $H$  has odd order elements, 
 because the torsion of the quotient $B_n/[P_n,P_n]$ is equal to the odd torsion of $\sn$ (see \cite[Corollary~4]{GGO}). However, in \cite[Remarks~26(c)]{GGO} the authors showed  a Bieberbach subgroup $L$ of $B_3/[P_3,P_3]$ with holonomy group $\Z_3$. We shall follow the idea used in \cite[Remarks~26(c)]{GGO} to exhibit Bieberbach subgroups of $B_n/[P_n,P_n]$ with finite abelian holonomy $G\leq \sn$.

This paper is organised as follows. In \resec{prelim} we recall some definitions and facts about crystallographic and Bieberbach groups, Artin braid groups and the quotients related with crystallographic groups. 
In \resec{braidversion} we will prove the following theorem that guarantees the existence and realise Bieberbach subgroups of $B_n/[P_n,P_n]$ with abelian holonomy group contained in $\sn$. 

\begin{thm}\label{th:akthm}
Let $G$ be a finite abelian group. 
\begin{enumerate}
 \item\label{it:akthma} There exists $n$ and a Bieberbach subgroup $\Gamma_G$ of $B_n/[P_n,P_n]$ of dimension $n(n-1)/2$ 
with holonomy group $G$.

 \item\label{it:akthmb} The finite abelian group $G$ is the holonomy group of a flat manifold $\mathcal{X}_{\Gamma_{G}}$ of dimension $n(n-~1)/2$, 
where $n$ is an integer for which $G$ embeds in the symmetric group $\sn$, and 
the fundamental group of $\mathcal{X}_{\Gamma_{G}}$ is isomorphic to a subgroup  $\Gamma_G$ of $B_n/[P_n, P_n]$.

\end{enumerate}
\end{thm}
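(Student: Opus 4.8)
The plan is to follow the strategy indicated in \cite[Remarks~26(c)]{GGO} for the case $G = \Z_3$, $n = 3$, and to generalise it to an arbitrary finite abelian $G$. First I would fix an embedding $\theta\colon G \hookrightarrow \sn$ for some suitable $n$; since every finite group embeds in a symmetric group (via the regular representation, one may even take $n = \ord{G}$, though a smaller $n$ often suffices when $G$ is abelian), part~\eqref{it:akthmb} will follow formally from part~\eqref{it:akthma} once the Bieberbach group is produced, because a Bieberbach group of dimension $d$ is by definition the fundamental group of a closed flat manifold $\mathcal{X}_{\Gamma_G}$ of that dimension, with holonomy the prescribed finite group. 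So the real content is \eqref{it:akthma}: inside $B_n/[P_n,P_n]$, exhibit a torsion-free subgroup $\Gamma_G$ that fits in a short exact sequence $1 \to \Z^{n(n-1)/2} \to \Gamma_G \to G \to 1$ with the translation lattice acted on faithfully by $G$ (equivalently, $\Gamma_G$ is a crystallographic group of dimension $n(n-1)/2$ that happens to be torsion-free).

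The key steps, in order, are as follows. (1) Recall from \cite[Proposition~1]{GGO} the crystallographic structure $1 \to \Z^{n(n-1)/2} \to B_n/[P_n,P_n] \xrightarrow{\bar\sigma} \sn \to 1$, where the free abelian normal subgroup is $P_n/[P_n,P_n]$ with its natural $\sn$-action by permutation of the indices of the generators $A_{i,j}$. (2) Take $H = \theta(G) \le \sn$ and set $\widetilde H = \bar\sigma^{-1}(H) = \sigma^{-1}(H)/[P_n,P_n]$; this is a crystallographic group of dimension $n(n-1)/2$ with holonomy $G$, but in general it is \emph{not} torsion-free, since by \cite[Corollary~4]{GGO} the torsion of $B_n/[P_n,P_n]$ is exactly the odd torsion of $\sn$, so every odd-order element of $G$ lifts to torsion. (3) The correction, following \cite[Remarks~26(c)]{GGO}, is to replace the obvious lift of each generator of $G$ by a lift \emph{twisted by a suitable pure braid coset}, i.e.\ to choose, for generators $g_1,\dots,g_k$ of $G$ with a presentation, elements $\tilde g_i = \beta_i\, [P_n,P_n] \in B_n/[P_n,P_n]$ with $\bar\sigma(\tilde g_i) = \theta(g_i)$, such that the subgroup $\Gamma_G = \ang{\tilde g_1,\dots,\tilde g_k,\, P_n/[P_n,P_n]}$ is torsion-free. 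The twisting amounts to adding to each relator's image in $\Z^{n(n-1)/2}$ a vector chosen so that the resulting $1$-cocycle (the ``translation part'' of the section $g \mapsto \tilde g$) is not cohomologous to one supported on a sublattice fixed by some nontrivial element — concretely, so that for every nontrivial $g \in G$ and every lift $\gamma$ of $g$ in $\Gamma_G$, the element $\gamma$ has infinite order. One verifies torsion-freeness by the standard criterion: $\Gamma_G$ is torsion-free iff for each nontrivial $g\in G$ of order $m$, the class in $H^1(\ang{g};\Z^{n(n-1)/2})$ of the restricted cocycle is nonzero; since $\ang g$ acts on $\Z^{n(n-1)/2}$ as a permutation matrix (direct sum of regular-type blocks on the $A_{i,j}$-coordinates), this $H^1$ is easily computed and the twisting vector can be chosen in the relevant block to kill the obstruction.

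I would organise the computation by first treating $G$ cyclic of prime power order $p^r$ (where for $p$ odd the untwisted $\widetilde H$ genuinely has torsion and the twist is essential, and for $p = 2$ one already has \cite[Corollary~13]{GGO}), then handling general finite abelian $G$ by writing $G$ as a product of cyclic groups, embedding it as a product of disjoint cycles in $\sn$ with $n = \sum$ of the cyclic orders, and taking $\Gamma_G$ to be generated by the twisted lifts of the cyclic factors together with $P_n/[P_n,P_n]$; the disjointness of the supporting cycles makes the several twists independent, so the obstruction vanishes coordinate-block by coordinate-block. The main obstacle I anticipate is step (3): producing the twisting vectors explicitly and proving once and for all that the resulting extension is torsion-free, i.e.\ controlling the torsion of \emph{every} lift of \emph{every} element of $G$, not merely of the chosen generators — this is exactly the delicate point that in \cite{GGO} was only carried out by hand for $\Z_3$ in $B_3/[P_3,P_3]$, and the bulk of \resec{braidversion} will be devoted to setting up the cocycle computation in the lattice $P_n/[P_n,P_n]$ carefully enough that the verification becomes a finite, checkable linear-algebra condition over $\Z$ for arbitrary $G$.
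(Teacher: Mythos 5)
Your overall framework (choose lifts of generators of $G$ so that the resulting extension class restricts non-trivially to every non-trivial cyclic subgroup) is the right abstract criterion, but your concrete construction in step (3) cannot work as stated. You define $\Gamma_G=\ang{\tilde g_1,\dots,\tilde g_k,\,P_n/[P_n,P_n]}$, i.e.\ you include the \emph{entire} translation lattice. Any two lifts of the same element of $G$ differ by an element of $P_n/[P_n,P_n]$, so this subgroup is independent of the choice of twisting and equals the full preimage $\widetilde H=\sigma^{-1}(\theta(G))/[P_n,P_n]$. By \cite[Corollary~4]{GGO} that group contains torsion of every odd order occurring in $G$, so no choice of ``twisting vectors'' can rescue it. Concretely, for $g$ of odd order $m$ there is a lift $\delta$ of order exactly $m$ in $B_n/[P_n,P_n]$, and for any $\mu$ in the lattice one has $(\mu\delta)^m=N(\mu)\cdot\delta^m=N(\mu)$, where $N=1+g+\cdots+g^{m-1}$ is the norm; since $N(\mu)$ always lies in $N\bigl(P_n/[P_n,P_n]\bigr)$, the restricted extension class in $H^2\bigl(\ang{g};P_n/[P_n,P_n]\bigr)\cong M^{\ang{g}}/NM$ vanishes for every choice of lift. (Note also that the torsion obstruction lives in $H^2$, not in $H^1$ as you wrote.)

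The missing idea --- and the actual content of the paper's proof --- is that $\Gamma_G$ must meet the translation lattice in a proper finite-index \emph{sublattice} $L$ of $P_n/[P_n,P_n]$: on the coordinates supporting the odd-order cyclic factors the paper takes only the powers $A_{r,s}^{d}$ (with $d$ the least common multiple of the odd prime powers) together with the norm elements $(A_{n_l+1,n_l+2}\delta_{n_l,p_{l+1}^{r_{l+1}}})^{p_{l+1}^{r_{l+1}}}$, keeping the remaining $A_{i,j}$ untouched. Shrinking the lattice shrinks the image of the norm map, and the coefficient computation in STEP~2 of the paper (comparing the exponent of $A_{1,2}$ in $w^d$, which leads to the unsolvable equation $\lambda_{1,2}+\lambda_{2,3}+\cdots+\lambda_{1,p_1^{r_1}}=-x_1/p_1^{r_1}$) is precisely the verification that the class now survives on $L$. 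The paper identifies $L$ and a basis for it via the Reidemeister--Schreier method rather than by cohomology, and the $2$-power factors are handled by untwisted lifts $\eta_f$ since $B_n/[P_n,P_n]$ has no $2$-torsion. Your reduction of part (b) to part (a) and your embedding of $G$ by disjoint cycles with $n$ the sum of the prime-power orders of the cyclic factors do match the paper; the fatal defect is only in the definition of $\Gamma_G$.
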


L.~Auslander and M.~Kuranishi proved that if $G$ is any finite group and $G=F/R$, with $F$ and $R$ free non-abelian, then $F_0=F/[R,R]$ is the fundamental group of a flat manifold (see \cite[Theorem~3]{AK}). As a consequence, they proved that any finite group $G$ is the holonomy group of a flat manifold, see \reth{TeoAK}. 
We note that \reth{akthm} reproves Auslander-Kuranishi theorem for finite abelian groups using braid theory. 

As an application, motivated by \cite{OR}, in \resec{zq} we shall consider the case of cyclic holonomy groups of odd order, i.e. Bieberbach subgroups $\Gamma_{\Z_q}$ ($\Gamma_{q}$ for short) with holonomy group $\Z_q\leq \sn$ of odd order.  
In Subsection~\ref{subsec:matrix} we describe the holonomy representation of $\Gamma_{q}$ as a matrix representation (see \reth{mainrep}) and using it we prove the following.

\begin{thm}\label{th:mainzpr}
Let $q=p_1^{r_1} p_2^{r_2}\cdots p_t^{r_t}$ be an odd number, where $p_i^{r_i}$ are distinct odd primes and $r_i\geq 1$ for all $1\leq i\leq t$. 
Let $\mathcal{X}_{\Gamma_{q}}$ be the  flat manifold of dimension $\frac{n(n-1)}{2}$ with fundamental group 
$\Gamma_{q}\leq B_{n}/[P_{n}, P_{n}]$ and holonomy group $\Z_{q}$. Then
\begin{enumerate}
 \item $\mathcal{X}_{\Gamma_{q}}$ is orientable.
 \item 
The first Betti number of $\mathcal{X}_{\Gamma_{q}}$ is $\beta_{1}(\mathcal{X}_{\Gamma_{q}})=\displaystyle\sum_{i=1}^t\frac{p_i^{r_i}-1}{2} + \frac{t(t-1)}{2}$.

\item The flat manifold $\mathcal{X}_{\Gamma_{q}}$ with fundamental group $\Gamma_{q}$ admits Anosov diffeomorphism if and only if $q\neq 3$.
\end{enumerate}
\end{thm}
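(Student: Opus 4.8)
The plan is to establish the three statements by exploiting the explicit matrix description of the holonomy representation of $\Gamma_q$ provided by \reth{mainrep}, together with the standard dictionary between properties of a flat manifold $\mathcal{X}_{\Gamma}$ and the holonomy representation $\rho\colon G\to \mathrm{GL}(n(n-1)/2,\R)$ of its Bieberbach fundamental group. First I would fix, via the Chinese Remainder Theorem, a generator $\theta$ of $\Z_q\leq\sn$ written as a product of disjoint cycles of lengths $p_1^{r_1},\dots,p_t^{r_t}$, so that $n=\sum_i p_i^{r_i}$ and $n(n-1)/2$ is the dimension. For \emph{(a)}, orientability is equivalent to $\det\rho(\theta)=1$; since $q$ is odd, $\rho(\theta)$ has odd multiplicative order, so $\det\rho(\theta)$ is an odd root of unity in $\{\pm1\}$, hence equals $1$. (Alternatively, read it off directly from the matrix form in \reth{mainrep}: every orbit of $\rho(\theta)$ on basis vectors has odd length, so each cyclic block is a permutation matrix of odd order with determinant $1$.) This part should be essentially immediate.

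For \emph{(b)}, the first Betti number of a flat manifold equals $\dim (\R^{n(n-1)/2})^{G}$, the dimension of the subspace of vectors fixed by the holonomy representation, equivalently the multiplicity of the trivial irrep in $\rho$. Here I would decompose $\rho$ explicitly using the matrix description in \reth{mainrep}. The coordinates of $B_n/[P_n,P_n]$ are indexed by pairs $\{i,j\}$ with $1\leq i<j\leq n$, and the $\Z_q$-action permutes these pairs (up to signs coming from the abelianised pure braid relations) according to the action of $\theta$ on $2$-element subsets of $\{1,\dots,n\}$. Splitting the index set into (i) pairs inside a single cycle of length $p_i^{r_i}$ and (ii) pairs with the two points in two different cycles, of lengths $p_i^{r_i}$ and $p_j^{r_j}$, one computes the fixed-point dimension orbit-by-orbit. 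The "intra-cycle" contribution should give $\sum_i (p_i^{r_i}-1)/2$ — matching the known first Betti number of the $\Z_{p^r}$-case treated in \cite{OR} — and each of the $\binom{t}{2}$ "inter-cycle" index blocks should contribute exactly $1$ to the fixed space, yielding the $t(t-1)/2$ term. The bookkeeping of the signs in the representation (making sure a putative fixed vector is genuinely fixed and not merely fixed up to sign, i.e. lies in the $+1$-eigenspace) is the one genuinely fiddly point, but it is controlled by \reth{mainrep}.

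For \emph{(c)}, I would invoke the criterion (Porteous, and as used in \cite{OR}) that a flat manifold $\mathcal{X}_\Gamma$ with holonomy $G$ and holonomy representation $\rho$ admits an Anosov diffeomorphism if and only if $\rho$ is $\Z$-equivalent to a direct sum of subrepresentations, each of which either has real dimension $\geq 2$ or occurs with multiplicity $\geq 2$ — more precisely, each $\Q$-irreducible component must appear with multiplicity $\geq 2$ or be defined over a number field admitting a hyperbolic ($\mathrm{GL}$-conjugate to a matrix with no eigenvalues of modulus $1$) integer matrix commuting with the action; the upshot for cyclic $G=\Z_q$ of odd order is that $\mathcal{X}$ fails to be Anosov exactly when some rational irreducible constituent is both $1$-dimensional-over-its-field-of-definition-in-a-degenerate-way and low-multiplicity. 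Concretely I expect the analysis to reduce to: the representation $\rho$ over $\Q$ breaks into the trivial part plus pieces associated to the nontrivial characters of the cyclic blocks; when $q>3$ each nontrivial piece either has $\Q$-dimension $\geq 2$ (equivalently, the relevant cyclotomic field $\Q(\zeta_d)$ with $d\mid q$, $d>1$, has degree $\varphi(d)\geq 2$, which holds whenever $d\neq 1,2$, in particular for all odd $d>1$ except... none, since $\varphi(d)\ge 2$ for every $d\ge 3$) or occurs with multiplicity $\geq 2$, so a hyperbolic automorphism can be assembled blockwise; whereas for $q=3$ the manifold is $3$-dimensional with holonomy $\Z_3$, the representation is the sum of the trivial rep and a single copy of the standard $2$-dimensional rotation rep, and one checks by hand (as in the classical classification of low-dimensional flat manifolds) that no Anosov diffeomorphism exists. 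The main obstacle will be assembling, for $q\neq 3$, an explicit hyperbolic integral matrix commuting with $\rho(\Z_q)$: I would build it block-diagonally, using on each $\Q$-irreducible block either a companion-type matrix of a Salem/Lehmer-style polynomial over $\Q(\zeta_d)$ when the block is "large," or a hyperbolic element of $\mathrm{GL}_2(\Z)$ acting diagonally across two identical copies when the block appears with multiplicity $\geq 2$, and then verify hyperbolicity of the resulting direct sum. The edge cases to watch are the smallest cyclotomic fields occurring (which is exactly what forces the single exception $q=3$) and the inter-cycle blocks from part \emph{(b)}, whose multiplicities must be re-examined to confirm they do not create an additional obstruction.
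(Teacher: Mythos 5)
Your parts \emph{(a)} and \emph{(b)} are correct and follow essentially the paper's own route: orientability via $\det[\psi_{q}(\theta)]_{{\cal B}}=1$ (the paper reads this off the matrix of \req{matrixzpr}; you note the determinant is an element of $\{\pm1\}$ of odd multiplicative order, which amounts to the same thing), and the first Betti number as the dimension of the fixed subspace, computed orbit-by-orbit on the pairs $\{i,j\}$ split into intra-cycle and inter-cycle families; the paper phrases this as $\frac{n(n-1)}{2}-\operatorname{rank}\left([\psi_{q}(\theta)]_{{\cal B}}-I\right)$ via Dekimpe's formula, which is the same count. Your worry about signs is unfounded: by \repr{1} the action on the $A_{i,j}$ is an honest permutation, and the only non-permutation block in \req{matrixzpr}, namely ${\cal N}_{p_j^{r_j}}[q]$, still contributes exactly one fixed vector.

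Part \emph{(c)} contains a genuine gap in the criterion you apply. For a flat manifold with cyclic holonomy, Porteous's theorem requires that \emph{every} $\Q$-irreducible component of $\Q$-degree $1$ or $2$ occur with multiplicity at least $2$; equivalently, every eigenvalue among $1,\,-1,\,\pm i,\,\omega,\,\omega^{2},\,-\omega,\,-\omega^{2}$ must have multiplicity at least $2$. The obstruction is the finiteness of the unit group of the field of definition, and $\Q(\omega)=\Q(\zeta_{3})$ is imaginary quadratic, hence has finite unit group even though $[\Q(\zeta_{3}):\Q]=2$. Your clause ``each nontrivial piece either has $\Q$-dimension $\geq 2$ \ldots{} or occurs with multiplicity $\geq 2$, so a hyperbolic automorphism can be assembled'' therefore certifies too much: it declares the $\zeta_{3}$-isotypic component harmless regardless of its multiplicity and, applied literally, would also certify $q=3$, which you only exclude by a separate ad hoc argument. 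What is actually needed --- and what the paper does --- is to read off from the characteristic polynomial $\prod_{j}\bigl(x^{p_j^{r_j}}-1\bigr)^{(p_j^{r_j}-1)/2}\cdot\prod_{j<h}\bigl(x^{p_j^{r_j}p_h^{r_h}}-1\bigr)$ of \reth{mainrep} that when $3\mid q$ and $q\neq 3$ the primitive cube roots of unity occur with multiplicity at least $2$ (the exponent $(3^{r}-1)/2\geq 4$ if $9\mid q$, and an extra occurrence in each factor $x^{3p_h^{r_h}}-1$ if $t\geq 2$), and likewise that the eigenvalue $1$ has multiplicity $\beta_{1}(\mathcal{X}_{\Gamma_{q}})\geq 2$ for $q\neq 3$ by part \emph{(b)}; the eigenvalues $-1,\pm i,-\omega,-\omega^{2}$ never occur since $q$ is odd. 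With that multiplicity check inserted, your blockwise construction of a hyperbolic commuting integral matrix goes through and the conclusion agrees with the paper's.
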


A long standing problem in dynamical systems is the classification of all compact manifolds supporting an Anosov diffeomorphism \cite{Sm}. 
We note that the proof of \reth{mainzpr} mostly depends on the holonomy representation of the Bieberbach group $\Gamma_{q}$, exploring the eigenvalues of the matrix representation (see \req{matrixzpr} in \reth{mainrep}).
At the end of Subsection~\ref{subsec:matrix} we give a presentation of the Bieberbach group $\Gamma_q$ of \reth{mainzpr} (see \repr{pres}) and we also give a set of generators of its center in \reth{centerzdk}. 
Finally, in Subsection~\ref{subsec:zpr} we obtain the following result.

\begin{thm}\label{th:kahler}
Let $p$ be an odd prime and let $r\geq 1$. 
Let $\mathcal{X}_{\Gamma_{p^r}}$ be the  flat manifold of dimension $\frac{p^r(p^r-1)}{2}$ with fundamental group 
$\Gamma_{p^r}\leq B_{p^r}/[P_{p^r}, P_{p^r}]$ and holonomy group $\Z_{p^r}$. Then the flat manifold  $\mathcal{X}_{\Gamma_{p^r}}$ is K\"ahler if and only if there is an integer $u\geq 1$ such that $p^r=4u+1$.
\end{thm}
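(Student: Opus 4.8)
The plan is to use the classical criterion for when a flat manifold admits a K\"ahler structure, due to Johnson and Rees: a compact flat manifold $\mathcal{X}_\Gamma$ with holonomy group $G$ and holonomy representation $\rho\colon G\to GL(m,\R)$ is K\"ahler if and only if $m$ is even and the complexification $\rho\otimes\C$ is the direct sum of a representation and its complex conjugate (equivalently, the rational holonomy representation contains no odd-dimensional irreducible $\R[G]$-summand with trivial action, and every real-irreducible summand is of complex type or appears with even multiplicity). Since here $G=\Z_{p^r}$ is cyclic of odd order $p^r$, every nontrivial irreducible rational representation of $\Z_{p^r}$ is the restriction of scalars of a faithful representation of $\Z_{p^{j}}$ for some $1\le j\le r$, and these are automatically of complex type; the only obstruction to the K\"ahler condition is therefore the multiplicity of the \emph{trivial} summand, which by Theorem~\ref{th:mainzpr}(2) equals $\beta_1(\mathcal{X}_{\Gamma_{p^r}})=(p^r-1)/2$. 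So the reduction I want is: $\mathcal{X}_{\Gamma_{p^r}}$ is K\"ahler $\iff$ $(p^r-1)/2$ is even $\iff$ $p^r\equiv 1\pmod 4$, i.e.\ $p^r=4u+1$ for some $u\ge1$.

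First I would recall, from \reth{mainrep}, the explicit matrix form of the holonomy representation $\rho$ of $\Gamma_{p^r}$ on $\R^{p^r(p^r-1)/2}$, and diagonalise $\rho\otimes\C$ over $\C$ by computing the eigenvalues of the generating matrix in \req{matrixzpr}. The point is to read off the multiplicity with which the eigenvalue $1$ occurs (this recovers the first Betti number and must be reconciled with Theorem~\ref{th:mainzpr}(2)) and to check that every eigenvalue $\zeta\ne 1$ is a primitive $p^j$-th root of unity for some $j$, occurring together with $\bar\zeta=\zeta^{-1}$ with the same multiplicity — the latter being forced because $\rho$ is a \emph{real} representation, so non-real eigenvalues come in conjugate pairs and no eigenvalue equals $-1$ (as $p^r$ is odd). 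This establishes that all nontrivial real-irreducible constituents are of complex type, so they contribute $2\dim_\C$ to the dimension and satisfy the Johnson--Rees condition regardless of multiplicity.

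Next I would assemble the criterion. Writing $\rho\otimes\C \cong \mathbf{1}^{\oplus b}\oplus\bigl(\bigoplus_k (\chi_k\oplus\bar\chi_k)\bigr)$ with $b=\beta_1=(p^r-1)/2$, the manifold is K\"ahler iff the whole of $\rho\otimes\C$ splits as $\tau\oplus\bar\tau$; the pieces $\chi_k\oplus\bar\chi_k$ already have this shape, so the condition is exactly that $\mathbf{1}^{\oplus b}$ splits as $\tau_0\oplus\bar\tau_0$ with $\tau_0$ also trivial, which needs $b$ even. Hence $\mathcal{X}_{\Gamma_{p^r}}$ is K\"ahler iff $(p^r-1)/2\in 2\Z$, i.e.\ $p^r\equiv 1\pmod 4$. (Orientability, needed to even speak of a complex structure compatibly, is already guaranteed by Theorem~\ref{th:mainzpr}(1).)

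The main obstacle I anticipate is the careful spectral analysis of the matrix \req{matrixzpr}: one has to show that, beyond the $(p^r-1)/2$ eigenvalues equal to $1$, the remaining spectrum consists precisely of primitive $p^j$-th roots of unity $(1\le j\le r)$, each with its conjugate and matching multiplicity, and — crucially — that $-1$ never appears. This is exactly where the oddness of $p^r$ enters: if $-1$ were an eigenvalue it would generate a $1$-dimensional real-irreducible summand on which $\Z_{p^r}$ acts through $\Z_2$, impossible for a group of odd order, so the real type case is automatically excluded and only the complex type survives. Once the spectrum is pinned down, the Johnson--Rees condition collapses to the parity of $\beta_1$, and invoking Theorem~\ref{th:mainzpr}(2) finishes the argument.
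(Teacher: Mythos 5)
Your proposal is correct and follows essentially the same route as the paper: both start from the explicit matrix form of the holonomy representation in \reth{mainrep}, decompose it into (real) irreducibles, and apply the algebraic K\"ahler criterion of Johnson--Rees/Szczepa\'nski to reduce everything to the parity of $(p^r-1)/2$. The only cosmetic difference is that the paper observes that \emph{every} real irrep occurs with multiplicity $(p^r-1)/2$ (the complexified representation is $(p^r-1)/2$ copies of the regular representation), whereas you isolate the trivial summand as the sole obstruction since the nontrivial constituents are of complex type; the two readings coincide here and yield the same conclusion.
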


As a consequence if $p$ is an odd prime such that $p^r=4u+1$ for some $u,r\geq1$ then the flat manifold  $\mathcal{X}_{\Gamma_{p^r}}$ of \reth{kahler} is a Calabi-Yau flat manifold of dimension $2u(4u+1)$, see \reco{kahler}.



\section{Preliminaries}\label{sec:prelim}

In this section, we recall briefly the definition of crystallographic and Bieberbach groups, for more details, see~\cite[Section~I.1.1]{Charlap},~\cite[Section~2.1]{Dekimpe} or~\cite[Chapter~3]{Wolf}. We also recall some facts about the Artin braid group $B_{n}$ on $n$ strings as well as the quotient group $B_n/[P_n,P_n]$ studied in \cite{GGO}. We refer the reader to~\cite{Ha} or \cite{KM} for more details about Artin braid groups. From now on, we identify $\operatorname{Aut}(\Z^n)$ with $\operatorname{GL}(n,\Z)$.

\subsection{Crystallographic and Bieberbach groups}\label{subsec:cryst}

 Let $G$ be a Hausdorff topological group. A subgroup $H$ of $G$ is said to be a \emph{discrete subgroup} if it is a discrete subset. If $H$ is a closed subgroup of $G$ then the coset space $G/H$ has the quotient topology for the projection $\map{\pi}{G}[G/H]$, and $H$ is said to be \emph{uniform} if $G/H$ is compact.

\begin{defn}\label{DefCristGeo}
A discrete and uniform subgroup  $\Pi$ of $\R^n\rtimes \operatorname{O}(n,\R)\subseteq \operatorname{Aff}(\R^n)$ is said to be a \emph{crystallographic group of dimension $n$}. If in addition $\Pi$ is torsion free then $\Pi$ is called a \emph{Bieberbach group} of dimension $n$.
\end{defn}

An  \emph{integral representation of rank $n$ of  $\Phi$} is a homomorphism $\map{\Theta}{\Phi}[\operatorname{Aut}(\Z^n)]$.  We say that $\Theta$ is a \emph{faithful representation} if it is injective.
The following characterisation of crystallographic groups was used in \cite{GGO} to stablish the connection between braid groups and crystallographic groups. 

\begin{lem}[Lemma~8 of \cite{GGO}]\label{lem:DefC2}
Let $\Pi$ be a group. Then $\Pi$ is a crystallographic group if and only if there exist an integer $n\in \N$ and a short exact sequence 
$\xymatrix{
0 \ar[r] & \Z^n \ar[r] & \Pi \ar[r]^{\zeta} & \Phi \ar[r] & 1}	
$
such that:
\begin{enumerate}
\item\label{it:DefC2a} $\Phi$ is finite, and
\item\label{it:DefC2b} the integral representation $\map{\Theta}{\Phi}[\operatorname{Aut}(\Z^n)]$, induced by 
conjugation on $\Z^n$ and defined by $\Theta(\phi)(x)=\pi x \pi^{-1}$, where $x\in \Z^{n}$, $\phi\in \Phi$ and $\pi\in \Pi$ is such that $\zeta(\pi)=\phi$, is faithful. 
\end{enumerate}
\end{lem}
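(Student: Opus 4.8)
The plan is to prove both implications, the bridge between them being the elementary but crucial computation of conjugation inside $\R^n\rtimes\operatorname{O}(n,\R)$: writing elements as pairs $(v,A)$ acting on $\R^n$ by $x\mapsto Ax+v$, one has $(v,A)(w,I)(v,A)^{-1}=(Aw,I)$. Thus conjugation carries the pure translation by $w$ to the pure translation by $Aw$, so in any affine realisation the conjugation action of the quotient on the translation lattice is read off directly from the linear parts. This is exactly what ties the abstract representation $\Theta$ to the geometry and makes faithfulness the pivotal hypothesis.

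For the forward implication, suppose $\Pi$ is a discrete uniform subgroup of $\R^n\rtimes\operatorname{O}(n,\R)$, and let $p\colon\R^n\rtimes\operatorname{O}(n,\R)\to\operatorname{O}(n,\R)$ be the projection onto the linear part. I would invoke the first Bieberbach theorem (see \cite{Charlap,Wolf,Dekimpe}) that $L:=\Pi\cap\R^n=\ker(p|_{\Pi})$ is a normal free abelian subgroup of rank $n$ and finite index, with $\Phi:=\Pi/L\cong p(\Pi)$ finite; this produces the short exact sequence with $L\cong\Z^n$ and gives condition \eqref{it:DefC2a}. Since $L$ is normal, each $A\in p(\Pi)$ preserves $L$, so $\Theta$ takes values in $\operatorname{Aut}(L)\cong\operatorname{GL}(n,\Z)$, and by the conjugation formula above $\Theta(\zeta(\pi))$ is precisely the linear part of $\pi$. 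If $\Theta(\phi)=I$, then any lift $\pi$ has trivial linear part, whence $\pi=(v,I)\in\Pi\cap\R^n=L$ and $\phi=1$; thus $\Theta$ is faithful, giving \eqref{it:DefC2b}.

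For the converse I would begin from the given short exact sequence and push it out along the inclusion $\Z^n\hookrightarrow\R^n$ of $\Phi$-modules, obtaining an extension $0\to\R^n\to\Pi'\to\Phi\to1$ together with an injection $\Pi\hookrightarrow\Pi'$ (injective since $\Z^n\to\R^n$ is and the induced map on quotients is the identity of $\Phi$). Because $\Phi$ is finite and $\R^n$ is a $\Q$-vector space, $H^{2}(\Phi,\R^n)=0$, so this extension splits and $\Pi'\cong\R^n\rtimes_{\Theta}\Phi$. Composing $\Pi\hookrightarrow\Pi'\cong\R^n\rtimes_{\Theta}\Phi$ with the map $(v,\phi)\mapsto(v,\Theta(\phi))$ into $\R^n\rtimes\operatorname{GL}(n,\R)$ yields a homomorphism that is injective exactly because $\Theta$ is faithful. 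Since $\Theta(\Phi)$ is a finite subgroup of $\operatorname{GL}(n,\R)$ it preserves an averaged inner product, so after an affine change of coordinates the image lies in $\R^n\rtimes\operatorname{O}(n,\R)$; the image of $\Z^n$ is then a full-rank lattice $\Lambda$, the image of $\Pi$ is a finite union of translates of $\Lambda$ sitting over the finitely many points of $\Theta(\Phi)$ (hence discrete), and $(\R^n\rtimes\operatorname{O}(n,\R))/\Pi$ is compact since it is a finite quotient of the compact space $(\R^n/\Lambda)\times\operatorname{O}(n,\R)$. Therefore $\Pi$ is crystallographic of dimension $n$.

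The main obstacle I anticipate lies in the converse: checking that the pushed-out extension genuinely splits (the vanishing $H^{2}(\Phi,\R^n)=0$) and, above all, that the resulting map into the affine group is injective, for it is here that faithfulness of $\Theta$ is indispensable—without it a nontrivial $\phi\in\Phi$ could act trivially on $\Z^n$ and would be forced to become a torsion translation, which is impossible. Once the injective affine embedding is secured, the invariance of an inner product and the verifications of discreteness and cocompactness are routine; on the forward side the only non-elementary input is the first Bieberbach theorem ensuring that the translation subgroup is a finite-index lattice of full rank.
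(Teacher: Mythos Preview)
The paper does not actually prove this lemma: it is quoted verbatim as Lemma~8 of \cite{GGO} in the preliminaries and used as a black box, so there is no proof here to compare against. Your argument is the standard one and is correct in both directions; the forward implication rests on Bieberbach's first theorem exactly as you say, and the converse via pushout along $\Z^n\hookrightarrow\R^n$, vanishing of $H^{2}(\Phi,\R^n)$ for finite $\Phi$, and averaging to land in $\operatorname{O}(n,\R)$ is precisely how this is done in \cite{Charlap,Dekimpe,Wolf}. One small wording issue: in the cocompactness step, $\Lambda$ is not normal in $\R^n\rtimes\operatorname{O}(n,\R)$, so the identification $\Lambda\backslash\bigl(\R^n\rtimes\operatorname{O}(n,\R)\bigr)\cong(\R^n/\Lambda)\times\operatorname{O}(n,\R)$ is only as topological spaces (coming from the fact that left multiplication by $(\lambda,I)$ affects only the translational coordinate); once this is said, passing to the further quotient by the finite group $\Pi/\Lambda$ gives compactness as you claim.
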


If $\Pi$ is a crystallographic group, the integer $n$ that appears in the statement of~\relem{DefC2} is called the \emph{dimension} of $\Pi$, the finite group $\Phi$ is called the \emph{holonomy group} of $\Pi$, and the integral representation $\map{\Theta}{\Phi}[\operatorname{Aut}(\Z^{n})]$ is called the \emph{holonomy representation of  $\Pi$}.

Was stated in \cite[Corollary~10]{GGO} that if $\Pi$ is a crystallographic group of dimension $n$ and holonomy group $\Phi$, and $H$ is a subgroup of $\Phi$, then there exists a crystallographic subgroup of $\Pi$ of dimension $n$ with holonomy group $H$.
This statement was used in \cite[Corollary~13]{GGO} to provide other crystallographic subgroups of $B_n/[P_n,P_n]$.

A Riemannian manifold $M$ is called \emph{flat} if it has zero curvature at every point.  
As a consequence of the first Bieberbach Theorem, there is a correspondence between Bieberbach groups and fundamental groups of closed flat Riemannian manifolds (see~\cite[Theorem~2.1.1]{Dekimpe} and the paragraph that follows it). 

\begin{rem}\label{rem:orientable}
We recall that the flat manifold determined by a Bieberbach group $\Pi$ is orientable if and only if the integral representation $\map{\Theta}{\Phi}[\operatorname{GL}(n,\Z)]$ satisfies $\im{\Theta}\subseteq \operatorname{SO}(n,\Z)$. This being the case, we say that $\Pi$ is \emph{an orientable Bieberbach group}. 
\end{rem}

It is a natural problem to classify the finite groups that are the holonomy group of a flat manifold. The answer was given by L.~Auslander and M.~Kuranishi in 1957, see \cite[Theorem~3]{AK}, \cite[Theorem~3.4.8]{Wolf} or \cite[Theorem~III.5.2]{Charlap}.

\begin{thm}[Auslander and Kuranishi]\label{th:TeoAK}
Any finite group is the holonomy group of some flat manifold.
\end{thm}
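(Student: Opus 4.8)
The plan is to reconstruct the classical Auslander--Kuranishi argument (the construction alluded to in the introduction) while isolating the one step that carries real content. Given a finite group $G$, I would first fix a free group $F$ of finite rank $s\ge 2$ with an epimorphism $F\twoheadrightarrow G$, and let $R$ be its kernel. By the Nielsen--Schreier theorem $R$ is free of finite rank $k=1+|G|(s-1)\ge 2$, so $R^{ab}$ is free abelian of rank $k$; setting $F_{0}=F/[R,R]$ one obtains a short exact sequence
\[
0\longrightarrow R^{ab}\cong\Z^{k}\longrightarrow F_{0}\xrightarrow{\ \zeta\ }G\longrightarrow 1,
\]
where $\zeta$ is induced by $F\twoheadrightarrow G$ and the associated integral representation $\Theta\colon G\to\operatorname{GL}(k,\Z)$ is the conjugation action on $R^{ab}$, equivalently the deck--transformation action of $G$ on $R^{ab}=H_{1}(\widetilde X;\Z)$ for the regular covering $\widetilde X\to\bigvee_{s}S^{1}$ corresponding to $R$. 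The aim is to prove that $F_{0}$ is a Bieberbach group with holonomy group $G$; the theorem then follows from the correspondence between Bieberbach groups and closed flat manifolds, together with the fact that the linear holonomy of the resulting manifold is $\Theta(G)\cong G$.

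I would next check that $F_{0}$ is crystallographic, which by \relem{DefC2} only requires $\Theta$ to be faithful. The cellular chain complex of $\widetilde X$ (equivalently, Fox calculus) gives the relation--module exact sequence $0\to R^{ab}\to(\Z G)^{s}\xrightarrow{\partial}I_{G}\to 0$, where $I_{G}$ is the augmentation ideal of $\Z G$; tensoring with $\Q$ and using semisimplicity of $\Q G$ gives $R^{ab}\otimes_{\Z}\Q\cong(\Q G)^{s-1}\oplus\Q$ as $\Q G$--modules. Hence for $g\ne 1$ the trace of $\Theta(g)$ is $1$, whereas $k=\dim_{\Q}(R^{ab}\otimes\Q)\ge 2$, so $\Theta(g)\ne\id$; thus $\Theta$ is faithful and, by \relem{DefC2}, $F_{0}$ is a crystallographic group of dimension $k$ with holonomy group $G$.

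The crux is the torsion--freeness of $F_{0}$, and this is the step I expect to be the genuine obstacle. A nontrivial torsion element of $F_{0}$ generates a finite subgroup which, $\Z^{k}$ being torsion free, injects into $G$; so it suffices to exclude torsion of prime order $p$, and for each cyclic subgroup $C=\Z_{p}\le G$ this amounts to showing the induced extension $0\to R^{ab}\to\zeta^{-1}(C)\to C\to 1$ does not split. Passing to the (free) preimage $E$ of $C$ in $F$, for which $E/R=C$, reduces this to the case $G=\Z_{p}$; then I would choose a lift $t\in F$ of a generator of $\Z_{p}$ whose class in $F^{ab}=\Z^{s}$ maps to a generator of $\Z_{p}$ under $F^{ab}\twoheadrightarrow\Z_{p}$, write $\theta$ for the order--$p$ automorphism of $R^{ab}$ induced by conjugation by $t$ and $N_{\theta}=\id+\theta+\cdots+\theta^{p-1}$, and compute, by expanding $(tr)^{p}$ for $r\in R$, that in $R^{ab}$ one has $[(tr)^{p}]=[t^{p}]+N_{\theta}[r]$. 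Thus the extension splits --- equivalently $F_{0}$ has $p$--torsion --- if and only if $[t^{p}]\in N_{\theta}R^{ab}$. To rule this out I would push forward along $p_{*}\colon R^{ab}=H_{1}(\widetilde X)\to H_{1}(\bigvee_{s}S^{1})=F^{ab}$, the map induced by $R\hookrightarrow F$: by functoriality $p_{*}[t^{p}]=[t^{p}]_{F^{ab}}=p[t]_{F^{ab}}$, and since $\theta$ is the restriction of an inner automorphism of $F$ we have $p_{*}\circ\theta=p_{*}$, hence $p_{*}(N_{\theta}R^{ab})=p\cdot\im{p_{*}}=p\cdot\ker{F^{ab}\twoheadrightarrow\Z_{p}}$. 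As $[t]_{F^{ab}}\notin\ker{F^{ab}\twoheadrightarrow\Z_{p}}$ by the choice of $t$ and $\Z^{s}$ is torsion free, $[t^{p}]\notin N_{\theta}R^{ab}$. Therefore $F_{0}$ has no $p$--torsion, so $F_{0}$ is a Bieberbach group of dimension $k$ with holonomy group $G$, and the closed flat manifold with fundamental group $F_{0}$ has holonomy $G$.

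I expect the first two steps to be bookkeeping --- Nielsen--Schreier plus a short character computation --- and essentially all the difficulty to sit in the torsion--freeness argument, i.e.\ in showing that the relation--module extension restricts to a nonsplit extension over every subgroup of prime order (equivalently, that the class $[t^{p}]$ is not a norm in $R^{ab}$). That is where one has to be careful about the reduction to prime--order cyclic subgroups and about the choice of the lift $t$; once those are in place the rest is formal.
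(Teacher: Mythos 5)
Your proof is correct, but note first that the paper contains no proof of \reth{TeoAK} at all: the theorem is quoted as a classical result with pointers to \cite{AK}, \cite{Wolf} and \cite{Charlap}, so the only in-paper material to compare against is the partial, braid-theoretic reproof in \reth{akthm}. What you have written is a sound, self-contained rendering of the original Auslander--Kuranishi construction $F_0=F/[R,R]$, and the two substantive steps both check out: faithfulness of the conjugation representation follows from your relation-module computation $R^{ab}\otimes\Q\cong(\Q G)^{s-1}\oplus\Q$, which gives $\operatorname{tr}\Theta(g)=1$ for $g\neq 1$ while the identity has trace $k=1+|G|(s-1)\geq 3$; and torsion-freeness correctly reduces to non-splitting of the restricted extension over each prime-order cyclic subgroup $C\leq G$ (via the free preimage $E$ of $C$), where the identity $[(tr)^p]=[t^p]+N_\theta[r]$ turns the question into whether $[t^p]$ lies in $N_\theta R^{ab}$, which you rule out by pushing forward to $E^{ab}$, using that conjugation becomes trivial there and that $[t]$ generates $E^{ab}$ modulo the image of $R^{ab}$. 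By contrast, the paper's own contribution reproves the statement only for finite \emph{abelian} $G$, by exhibiting an explicit Bieberbach subgroup $\Gamma_G$ of $B_n/[P_n,P_n]$, with a Reidemeister--Schreier computation of the translation lattice and a hands-on exclusion of odd torsion by comparing exponent sums of the generators $A_{i,j}$. The trade-off is clear: your classical route covers every finite group and is conceptually cleaner, but the holonomy representation it produces is the relation module, about which little is explicit; the braid-theoretic route is restricted to abelian holonomy and to dimension $n(n-1)/2$, but yields the completely explicit holonomy matrices of \reth{mainrep}, on which the paper's applications to orientability, Anosov diffeomorphisms and K\"ahler structure entirely depend.
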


\subsection{Artin braid groups}\label{subsec:braids}

We recall in this subsection some facts about the Artin braid group $B_{n}$ on $n$ strings. 
We refer the reader to~\cite{Ha} and \cite{KM} for more details. It is well known 
that $B_{n}$ possesses a presentation with generators $\sigma_{1},\ldots,\sigma_{n-1}$ that are subject to the relations $\sigma_{i} \sigma_{j} = \sigma_{j}  \sigma_{i}$ for all $1\leq i<j\leq n-1$ such that $\ord{i-j}\geq 2$
and 
$\sigma_{i+1} \sigma_{i} \sigma_{i+1} =\sigma_{i} \sigma_{i+1} \sigma_{i}$ for all $1\leq i\leq n-2$.

Let $\map{\sigma}{B_{n}}[\sn]$ be the homomorphism defined on the given generators of $B_{n}$ by $\sigma(\sigma_{i})=(i,\, i+1)$ for all $1\leq i\leq n-1$. Just as for braids, we read permutations from left to right so that if $\alpha, \beta \in \sn$ then their product is defined by $\alpha\cdot \beta (i)=\beta(\alpha(i))$ for $i=1,2,\ldots, n$. The pure braid group $P_{n}$ on $n$ strings is defined to be the kernel of $\sigma$, from which we obtain the following short exact sequence:
\begin{equation}\label{eq:sespn}
1 \to P_n \to  B_n \stackrel{\sigma}{\to} \sn \to 1.
\end{equation}
A generating set of $P_{n}$ is given by $\brak{A_{i,j}}_{1\leq i<j\leq n}$, where $A_{i,j}=\sigma_{j-1}\cdots \sigma_{i+1}\sigma_{i}^{2} \sigma_{i+1}^{-1}\cdots \sigma_{j-1}^{-1}$.
For $1\leq i<j\leq n$, we also set $A_{j,i}=A_{i,j}$, 
and if $A_{i,j}$ appears with exponent $m_{i,j}\in \Z$, then we let $m_{j,i}=m_{i,j}$. 
It follows from the presentation of $P_{n}$ given  in~\cite{Ha} that $P_n/[P_n,P_n]$ is isomorphic to $\Z^{n(n-1)/2}$, and that a basis is given by the $A_{i,j}$, where $1\leq i<j\leq n$, and where by abuse of notation, the $[P_n, P_n]$-coset of $A_{i,j}$ will also be denoted by $A_{i,j}$. 
Using \req{sespn}, we obtain the following short exact sequence:
\begin{equation}\label{eq:sespnquot}
1 \to  P_n/[P_n,P_n] \to  B_n/[P_n,P_n] \stackrel{\overline{\sigma}}{\to} \sn \to 1,
\end{equation}
where $\map{\overline{\sigma}}{B_n/[P_n,P_n]}[\sn]$ is the homomorphism induced by $\sigma$. 

In \cite[Theorem~3]{GGO} it is assumed that  $n_1,\ldots,n_t$  are odd  
integers.  Nevertheless the proof of \cite[Theorem~3(a)]{GGO}, which states  that  the  inclusion of a direct product 
\break 
$\map{\iota}{B_{n_{1}}\times \cdots \times B_{n_{t}}}[B_{m}]$ induces an injective homomorphism 
\break
$\map{\overline{\iota}}{\displaystyle\frac{B_{n_1}}{[P_{n_1},P_{n_1}]}\times \cdots \times
  \frac{B_{n_t}}{[P_{n_t},P_{n_t}]}}[\frac{B_{m}}{[P_{m},P_{m}]}]$, do not use the hypotheses that $n_1,n_2,\ldots,n_t$ are odd integers. So we can state that result as follows.

\begin{thm}[Theorem~3(a) of \cite{GGO}]\label{th:inclusion}
Let $t,m\in \N$, and let $n_1,n_2,\ldots,n_t$ be integers greater than or equal to $3$ for which $\sum_{i=1}^{t} \, n_i\leq m$. Then the inclusion $\map{\iota}{B_{n_{1}}\times \cdots \times B_{n_{t}}}[B_{m}]$ induces an injective homomorphism
$\map{\overline{\iota}}{\displaystyle\frac{B_{n_1}}{[P_{n_1},P_{n_1}]}\times \cdots \times \frac{B_{n_t}}{[P_{n_t},P_{n_t}]}}[\frac{B_{m}}{[P_{m},P_{m}]}]$.
\end{thm}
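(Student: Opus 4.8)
The plan is to prove directly that the induced homomorphism $\overline{\iota}$ is injective, by fitting it into a morphism between short exact sequences of the form \req{sespnquot} and running a diagram chase. First I would make $\iota$ explicit. Writing $\ell_{k}=n_{1}+\cdots+n_{k-1}$ (so that $\ell_{1}=0$), the hypothesis $\sum_{i=1}^{t}n_{i}\leq m$ guarantees that the blocks $\brak{\ell_{k}+1,\ldots,\ell_{k}+n_{k}}$, $1\leq k\leq t$, are pairwise disjoint subsets of $\brak{1,\ldots,m}$, and $\iota$ sends the $j$-th Artin generator of the $k$-th factor $B_{n_{k}}$ to $\sigma_{\ell_{k}+j}\in B_{m}$. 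One checks that the braid relations are preserved (two generators coming from different factors have indices differing by at least $2$), so $\iota$ is a well defined homomorphism. From the words $A_{i,j}=\sigma_{j-1}\cdots\sigma_{i+1}\sigma_{i}^{2}\sigma_{i+1}^{-1}\cdots\sigma_{j-1}^{-1}$ it follows at once that $\iota$ carries the generator $A_{i,j}$ of $P_{n_{k}}$ to $A_{\ell_{k}+i,\ell_{k}+j}$ of $P_{m}$; in particular $\iota$ restricts to an inclusion $P_{n_{1}}\times\cdots\times P_{n_{t}}\hookrightarrow P_{m}$, and since $\iota$ is a homomorphism on the product, $\iota\bigl(\prod_{k}[P_{n_{k}},P_{n_{k}}]\bigr)=\prod_{k}[\iota(P_{n_{k}}),\iota(P_{n_{k}})]\subseteq[P_{m},P_{m}]$. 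Hence $\iota$ descends to the homomorphism $\overline{\iota}$ of the statement.

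Next I would assemble the commutative diagram whose top row is the direct product, over $k=1,\ldots,t$, of the sequences \req{sespnquot} (with $n$ replaced by $n_{k}$), and whose bottom row is \req{sespnquot} for $m$:
\[
\xymatrix{
1 \ar[r] & \displaystyle\prod_{k=1}^{t} P_{n_{k}}/[P_{n_{k}},P_{n_{k}}] \ar[r] \ar[d]_{\alpha} & \displaystyle\prod_{k=1}^{t} B_{n_{k}}/[P_{n_{k}},P_{n_{k}}] \ar[r] \ar[d]^{\overline{\iota}} & \displaystyle\prod_{k=1}^{t} S_{n_{k}} \ar[r] \ar[d]^{\beta} & 1 \\
1 \ar[r] & P_{m}/[P_{m},P_{m}] \ar[r] & B_{m}/[P_{m},P_{m}] \ar[r]^{\overline{\sigma}} & S_{m} \ar[r] & 1
}
\]
Here $\beta$ is the block inclusion of symmetric groups, which is injective because the blocks are pairwise disjoint, and $\alpha$ is the homomorphism induced on abelianisations by $\iota|_{\prod_{k}P_{n_{k}}}$; commutativity of both squares is immediate from the description of $\iota$ above.

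Then injectivity of $\overline{\iota}$ follows by the standard diagram chase (a special case of the five lemma): if $\overline{\iota}(\overline{x})=1$ then the image of $\overline{x}$ in $\prod_{k}S_{n_{k}}$ is killed by $\beta$, hence is trivial, so $\overline{x}$ lies in $\prod_{k}P_{n_{k}}/[P_{n_{k}},P_{n_{k}}]$; and then $\alpha(\overline{x})=1$ forces $\overline{x}=1$ provided $\alpha$ is injective. The injectivity of $\alpha$ is the only point that really uses the structure of the pure braid group abelianisation: $\prod_{k}P_{n_{k}}/[P_{n_{k}},P_{n_{k}}]$ is free abelian with basis the $A_{i,j}$ for $1\leq i<j\leq n_{k}$ and $1\leq k\leq t$, while $P_{m}/[P_{m},P_{m}]\cong\Z^{m(m-1)/2}$ is free abelian with basis the $A_{p,q}$ for $1\leq p<q\leq m$, and $\alpha$ sends the basis element $A_{i,j}$ of the $k$-th factor to the basis element $A_{\ell_{k}+i,\ell_{k}+j}$. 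Since the blocks are pairwise disjoint, distinct basis elements of the source are sent to distinct basis elements of the target, so $\alpha$ is the inclusion of the coordinate sublattice they span, hence injective. This finishes the argument.

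I do not expect a serious obstacle: the whole point is the observation that under the block inclusion the pure braid generators $A_{i,j}$ go to pure braid generators $A_{\ell_{k}+i,\ell_{k}+j}$, so that the map on pure braid abelianisations is a split injection of lattices, the hypothesis $\sum_{i}n_{i}\leq m$ being used only through the disjointness of the blocks. As the paragraph preceding the statement already points out, the parity of the $n_{i}$ plays no role; the only care required is the bookkeeping with the offsets $\ell_{k}$ and checking that $\iota$ respects the braid relations across block boundaries.
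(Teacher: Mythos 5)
Your argument is correct. Note, however, that the paper does not prove this statement at all: it is imported verbatim as Theorem~3(a) of \cite{GGO}, with only the remark that the proof given there never uses the oddness of the $n_i$. Your diagram chase --- reducing injectivity of $\overline{\iota}$ to injectivity of the block map $\beta$ on symmetric groups together with injectivity of $\alpha$ on the abelianised pure braid groups, the latter because $\iota$ sends the basis element $A_{i,j}$ of the $k$-th factor to the basis element $A_{\ell_k+i,\ell_k+j}$ of the free abelian group $P_m/[P_m,P_m]$ --- is the natural self-contained proof and is in essence the argument of \cite{GGO}; the only points needing care (well-definedness of $\overline{\iota}$ via $\iota(\prod_k[P_{n_k},P_{n_k}])\subseteq[P_m,P_m]$, and disjointness of the blocks, which is exactly where $\sum_i n_i\leq m$ enters) are all handled.
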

\reth{inclusion} will be useful to prove \repr{inclusion}.
Since $B_1$ is the trivial group and $B_2/[P_2, P_2]\cong \Z$, we shall suppose in most of this paper that $n\geq 3$. 
The study of the action by conjugacy of $B_n$ on $P_n$ provides the following result. 

\begin{prop}[Proposition~12 of \cite{GGO}]\label{prop:1}
  Let $\alpha\in B_n/[P_n, P_n]$, and let $\pi$ be the permutation induced by $\alpha^{-1}$, then $\alpha A_{i,j}\alpha^{-1}=A_{\pi(i), \pi(j)}$
  in $P_n/[P_n, P_n]$.
\end{prop}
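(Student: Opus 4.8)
The plan is to realise both sides of the asserted identity as the values of two homomorphisms defined on $B_n/[P_n,P_n]$ and then to compare them on the generators $\sigma_1,\dots,\sigma_{n-1}$. First I would observe that conjugation yields a well-defined homomorphism $\map{\Phi}{B_n/[P_n,P_n]}[\aut{P_n/[P_n,P_n]}]$ given by $\Phi(\alpha)(x)=\alpha x\alpha^{-1}$. Indeed, $[P_n,P_n]$ is characteristic in $P_n$ and $P_n$ is normal in $B_n$, so conjugation by $B_n$ descends to the quotients; moreover inner automorphisms act trivially on an abelianisation, so the conjugation action of $P_n$ on $P_n/[P_n,P_n]$ is trivial and $\Phi$ is genuinely defined on $B_n/[P_n,P_n]$ (equivalently, the action factors through $\overline{\sigma}$ and $\sn$).

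Next I would introduce the second map $\map{T}{B_n/[P_n,P_n]}[\aut{P_n/[P_n,P_n]}]$ defined by $T(\alpha)(A_{i,j})=A_{\pi(i),\pi(j)}$, where $\pi=\overline{\sigma}(\alpha^{-1})$ is the permutation induced by $\alpha^{-1}$. Since $\pi$ is a bijection of $\{1,\dots,n\}$ it permutes the basis $\{A_{i,j}\}_{1\le i<j\le n}$ of $P_n/[P_n,P_n]$ (here one uses $A_{i,j}=A_{j,i}$), so each $T(\alpha)$ is an automorphism. The essential formal point is that $T$ is a homomorphism: using that $\overline{\sigma}$ is a homomorphism together with the left-to-right convention $\alpha\cdot\beta(i)=\beta(\alpha(i))$ for permutations fixed before \req{sespn}, one computes $\overline{\sigma}((\alpha\beta)^{-1})(i)=\overline{\sigma}(\alpha^{-1})\bigl(\overline{\sigma}(\beta^{-1})(i)\bigr)$, whence $T(\alpha\beta)=T(\alpha)\circ T(\beta)$; it is precisely here that the inverse appearing in the statement is needed, since without it the analogous map would only be an anti-homomorphism. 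The proposition asserts exactly that $\Phi=T$, and as both are homomorphisms it suffices to check $\Phi(\sigma_k)=T(\sigma_k)$ for $k=1,\dots,n-1$, the classes of the $\sigma_k$ generating $B_n/[P_n,P_n]$.

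It then remains to treat the base case $\alpha=\sigma_k$, for which $\pi=\overline{\sigma}(\sigma_k^{-1})=(k,k+1)$; writing $s=(k,k+1)$, the task is to show $\sigma_k A_{i,j}\sigma_k^{-1}\equiv A_{s(i),s(j)}\pmod{[P_n,P_n]}$. I would derive this from the definition $A_{i,j}=\sigma_{j-1}\cdots\sigma_{i+1}\sigma_i^2\sigma_{i+1}^{-1}\cdots\sigma_{j-1}^{-1}$ and the braid relations, distinguishing the cases $\{k,k+1\}\cap\{i,j\}=\vide$ (then $\sigma_k$ commutes with $A_{i,j}$), $\{i,j\}=\{k,k+1\}$ (then $A_{k,k+1}=\sigma_k^2$ commutes with $\sigma_k$), and the mixed cases in which exactly one of $i,j$ lies in $\{k,k+1\}$; as $s$ fixes every index outside $\{k,k+1\}$, the outcome $A_{s(i),s(j)}$ agrees with $A_{\pi(i),\pi(j)}$ in each case. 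I expect the mixed cases to be the only genuine obstacle: there the conjugate $\sigma_k A_{i,j}\sigma_k^{-1}$ equals the expected generator only up to a correction factor that is a product of conjugates of other pure braid generators $A_{r,s}$, and the crux is to confirm that every such correction term lies in $[P_n,P_n]$ and hence vanishes in the abelianisation $P_n/[P_n,P_n]$. Once the base case is secured, the homomorphism argument above gives $\Phi=T$ throughout $B_n/[P_n,P_n]$, which is the claimed formula.
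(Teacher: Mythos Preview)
The paper does not supply its own proof of this proposition: it is quoted as Proposition~12 of \cite{GGO} and then used as a black box throughout. There is therefore nothing in the present paper to compare your argument against.

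That said, your strategy is the standard one and is correct in outline. Reducing the identity to the generators $\sigma_k$ via the two homomorphisms $\Phi$ and $T$ is exactly the right move, and your check that $T$ is a genuine homomorphism (rather than an anti-homomorphism) using the left-to-right convention for permutations is on the mark; this is indeed why the inverse in the statement is needed. The only place where your proposal remains a sketch is the base case: you correctly anticipate that the mixed cases $\ord{\{k,k+1\}\cap\{i,j\}}=1$ are the ones requiring work. In $B_n$ the conjugate $\sigma_k A_{i,j}\sigma_k^{-1}$ is in each such case equal to $A_{s(i),s(j)}$ conjugated by a word in the pure braid generators $A_{r,s}$, and hence equals $A_{s(i),s(j)}$ in the abelianisation $P_n/[P_n,P_n]$. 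The explicit formulas for $\sigma_k A_{i,j}\sigma_k^{-1}$ (available for instance in \cite{Ha} or \cite{KM}) confirm this directly, so the remaining gap is a routine computation rather than a conceptual obstacle.
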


\repr{1} was important to establish the connection between braid groups and crystallographic groups, because from it we know explicitly the holonomy representation of the quotient group $B_n/[P_n,P_n]$ and then applying \relem{DefC2} was proved that there is the short exact sequence of \req{sespnquot},
and the middle group $B_n/[P_n,P_n]$ is a crystallographic group (see \cite[Proposition~1]{GGO}).

Using~\cite[Proposition~1 and Corollary~10]{GGO} it is possible to produce other crystallographic groups as follows. 
Let  $H$ be a subgroup of $\sn$, and consider the following short exact sequence  
$1 \to \frac{P_n}{[P_n, P_n]} \to \widetilde{H}_n \stackrel{\overline{\sigma}}{\to} H \to 1
$
induced by the sequence given in \req{sespnquot}, where $\widetilde{H}_n$ is defined by 
\begin{equation}\label{eq:BCG3}
\widetilde{H}_n=\frac{\sigma^{-1}(H)}{[P_n, P_n]}.
\end{equation}
Then the group $\widetilde{H}_n$ defined by \req{BCG3}
is a crystallographic group of dimension $n(n-1)/2$ with holonomy group $H$, see \cite[Corollary~13]{GGO}. 
Was proved in \cite[Theorem~2]{GGO} that if $n\geq 3$ then the quotient group $B_n/[P_n, P_n]$ has no finite-order elements of even order. Hence using the information above was proved that if $H$ is a finite $2$-group, then $H$ is the holonomy group of some flat manifold $M$. Further, the dimension of $M$ may be chosen to be $n(n-1)/2$, where $n$ is an integer for which $H$ embeds in the symmetric group $\sn$, and the fundamental group of $M$ is isomorphic to the subgroup $\widetilde{H}_n$ of $B_n/[P_n, P_n]$,
see \cite[Theorem~20]{GGO}.

\section{Bieberbach groups with abelian holonomy group from Artin braid groups}\label{sec:braidversion}

Let $G$ be a finite abelian group. In this section we shall prove \reth{akthm} (it also reproves \reth{TeoAK} for abelian groups using braid theory) that guaranteed that  there exist $n$ and a Bieberbach subgroup $\Gamma_G$ of $B_n/[P_n,P_n]$ of dimension $n(n-1)/2$ 
with holonomy group $G$. As a consequence $G$ is the holonomy group of a flat manifold $\mathcal{X}_{\Gamma_{G}}$ of dimension $n(n-1)/2$, 
where $n$ is an integer for which $G$ embeds in the symmetric group $\sn$, and 
the fundamental group of $\mathcal{X}_{\Gamma_{G}}$ is isomorphic to a subgroup  $\Gamma_G$ of $B_n/[P_n, P_n]$.

\begin{proof}[Proof of \reth{akthm}]
Let $q$ be the cardinality of the finite abelian group $G$, $\left| G \right|=q$.
\begin{enumerate}
 \item We will consider 3 cases for $q$ depending of its prime decomposition. 

 \begin{enumerate}
  \item Suppose that $q$ is given by product of powers of the prime 2, so $G$ is a 2-group.
  In this case the result follows from \cite[Theorem~20]{GGO}.

  \item Suppose that $q$ is even and that in the prime decomposition of $q$ appears at least one odd prime. 
Then, from the fundamental theorem of finite abelian groups, the group $G$ may be decomposed as $G=G_1\times G_2$, with 
\begin{align}
G_1&=\Z_{p_1^{r_1}}\times \Z_{p_2^{r_2}}\times \cdots \times \Z_{p_t^{r_t}}, \textrm{ and }\label{eq:g1}\\ 
G_2&=\Z_{2^{r_{t+1}}}\times \cdots \times \Z_{2^{r_{t+v}}}\nonumber
\end{align}
where $p_i$ are odd primes not necessarily different. Let $d=\operatorname{lcm}(p_1^{r_1}, p_2^{r_2},\ldots,p_t^{r_t})$ the least common multiple of the integers $p_1^{r_1}, p_2^{r_2},\ldots,p_t^{r_t}$.

For $1\leq l\leq t$, let $n_l=\sum_{i=1}^lp_i^{r_i}$, and for $1\leq f\leq v$ let 
$m_f=\sum_{i=1}^{f}2^{r_{t+i}}$.
So from \cite[Remark]{Ho} $G$ is a subgroup of $\sn$, with $n_t+m_v=n$. 
For convention, let $n_0=0$.

\textbf{STEP 1}. 
Consider the following subsets of $B_n/[P_n,P_n]$, 
\begin{align}
X_1&=\left\{ A_{1,2}\delta_{0,n_1},\, A_{n_1+1,n_1+2}\delta_{n_1,p_2^{r_2}},\, \cdots,\, A_{n_{t-1}+1,n_{t-1}+2}\delta_{n_{t-1},p_t^{r_t}}\right\}\label{eq:x1}\\
X_2&=\left\{ A_{r,s}^{d} \textrm{ for } 1\leq r<s\leq n_t \right\}\label{eq:x2}\\
X_3&=\left\{ A_{i,j} \textrm{ for } 1\leq i<j\leq n \textrm{ and } (i,j)\notin \set{(r,s)}{1\leq r<s\leq n_t} \right\}\label{eq:x3}\\
X_4&=\left\{\eta_{1},\, \eta_{2},\, \ldots,\,  \eta_{v} \right\}\nonumber
\end{align}
where 
\begin{equation}\label{eq:delta}
\delta_{n_l,\, p_{l+1}^{r_{l+1}}}=\sigma_{n_l+p_{l+1}^{r_{l+1}}-1}\cdots \sigma_{n_l+\frac{p_{l+1}^{r_{l+1}}+1}{2}} \sigma_{n_l+\frac{p_{l+1}^{r_{l+1}}-1}{2}}^{-1}\cdots \sigma_{n_l+1}^{-1},
\end{equation} 
for $0\leq l\leq t-1$, is the element given in \cite[Equation~(14)]{GGO}
and
\begin{equation*}\label{eq:eta}
\eta_f=\overline{\iota}(1,\ldots,1,\sigma_{1}\cdots \sigma_{2^{r_{t+f}}-1},1,\ldots,1)=\sigma_{n_t+m_{f-1}+1}\cdots \sigma_{n_t+m_f-1}
\end{equation*} 
for $1\leq f\leq v$, where 
\begin{equation}\label{eq:iotabar}
\map{\overline{\iota}}{\displaystyle\frac{B_{p_1^{r_1}}}{[P_{p_1^{r_1}},P_{p_1^{r_1}}]}\times \cdots \times \frac{B_{p_t^{r_t}}}{[P_{p_t^{r_t}},P_{p_t^{r_t}}]}\times \frac{B_{2^{r_{t+1}}}}{[P_{2^{r_{t+1}}},P_{2^{r_{t+1}}}]}\times \cdots \times \frac{B_{2^{r_{t+v}}}}{[P_{r_{t+v}},P_{r_{t+v}}]}}[\frac{B_{n}}{[P_{n},P_{n}]}]
\end{equation} 
is the injective homomorphism from \reth{inclusion} and the element $\sigma_{1}\cdots \sigma_{2^{r_{t+f}}-1}$ is in the $(t+f)$\th factor. 
Recall from \cite[Lemma~28]{GGO} that $\delta_{n_l,\, p_{l+1}^{r_{l+1}}}$ has order $p_{l+1}^{r_{l+1}}$ in $B_n/[P_n,P_n]$.

Let $\Gamma$ be the subgroup of $\frac{\sigma^{-1}\left( G \right)}{[P_n,\ P_n]}$ 
generated by $X=X_1\cup X_2\cup X_3\cup X_4$. 
Let $L=\Gamma\cap \operatorname{Ker}(\overline{\sigma})=\Gamma\cap \setang{A_{i,j}}{1\leq i<j\leq n}$. So, $L$ is a free abelian group. 
We shall construct a basis for $L$.

Let 
\begin{align}
\Lambda_{\textrm{[odd prime]}}(s_1,\ldots,s_t)&= (A_{1,2}\delta_{0,n_1})^{s_1}(A_{n_1+1,n_1+2}\delta_{n_1,p_2^{r_2}})^{s_2}\cdots(A_{n_{t-1}+1,n_{t-1}+2}\delta_{n_{t-1},p_t^{r_t}})^{s_t} \label{eq:lambdaodd}\\
\Lambda_{\textrm{[prime 2]}}(s_{t+1},\ldots,s_{t+v})&=\eta_{1}^{s_{t+1}}\eta_{2}^{s_{t+2}} \ldots \eta_{v}^{s_{t+v}}\nonumber.
\end{align}

We need a set of coset representatives of $L$ in $\Gamma$:
\begin{align*}
M & =\left\{ \prod_{\substack{ 0\leq s_\alpha\leq p_\alpha^{r_\alpha}-1 \\ 1\leq \alpha\leq t }} \Lambda_{\textrm{[odd prime]}}(s_1,\ldots,s_t) 
\prod_{\substack{ 0\leq s_\alpha\leq 2^{r_\alpha}-1 \\ t+1\leq \alpha\leq t+v }}\Lambda_{\textrm{[prime 2]}}(s_{t+1},\ldots,s_{t+v})
\right\}       
\end{align*}
From the Reidemeister-Schreier method (see \cite[Appendix~I]{KM}) the elements
\begin{equation*}
 y_{j,k}=M_kx_j\overline{M_kx_j}^{-1}
\end{equation*}
form a set of generators for $L$, with $x_j\in X$ and $M_k\in M$. Now, we analyse cases for different choices of $x_j\in X$.

\begin{enumerate}
	\item[Case 1.]  We note that for $x_j\in X_2\cup X_3$, we obtain
 $\overline{M_kx_j}=\overline{M_k}=M_k$.
 Then, in these cases we obtain generators $X_2\cup X_3$. 

\item[Case 2.] For $x_j=A_{1,2}\delta_{0,n_1}$ and for 
\begin{equation*}
M_k= \prod_{\substack{ 0\leq s_\alpha\leq p_\alpha^{r_\alpha}-1 \\ 2\leq \alpha\leq t }} \Lambda_{\textrm{odd prime}}(n_1-1 ,s_2,\ldots,s_t) 
\prod_{\substack{ 0\leq s_\alpha\leq 2^{r_\alpha}-1 \\ t+1\leq \alpha\leq t+v }}\Lambda_{\textrm{prime 2}}(s_{t+1},\ldots,s_{t+v})
\end{equation*} 
we get 
$y_{j,k}=M_kx_j\overline{M_kx_j}^{-1}=(A_{1,2}\delta_{0,n_1})^{p_1^{r_1}}$. 
For other cases of $M_k$, still taking $x_j=A_{1,2}\delta_{0,n_1}$, we get $y_{j,k}=e$, since $\overline{M_kx_j}=M_kx_j$. 

A similar argument for $x_j=A_{n_l+1,n_l+2}\delta_{n_l,p_{l+1}^{r_{l+1}}}$, $1\leq l\leq t-1$, is used to show that we obtain generators
$(A_{n_l+1,n_l+2}\delta_{n_l,p_{l+1}^{r_{l+1}}})^{p_{l+1}^{r_{l+1}}}$ for  $1\leq l\leq t-1$

\item[Case 3.] Now we consider the case $x_j=\eta_f$, with $f\in\brak{1,2,\ldots,v}$. Let $M_k$ be the expression:
\begin{equation*}
M_k=\prod_{\substack{ 0\leq s_\alpha\leq p_\alpha^{r_\alpha}-1 \\ 1\leq \alpha\leq t }} \Lambda_{\textrm{odd prime}}(s_1 ,s_2,\ldots,s_t) 
\prod_{\substack{ 0\leq s_\alpha\leq 2^{r_\alpha}-1 \\ t+1\leq \alpha\leq t+v \\\alpha\neq t+f}}\Lambda_{\textrm{prime 2}}(s_{t+1},\ldots,2^{r_{t+f}}-1,\ldots,s_{t+v})
\end{equation*}
So, in these cases we get $y_{j,k}=\eta_f^{2^{r_{t+f}}}$. 
Since $(\sigma_{1}\cdots \sigma_{2^{r_{t+f}}-1})^{2^{r_{t+f}}}=\Delta^2_{2^{r_{t+f}}}$ is the full twist in $\frac{B_{2^{r_{t+f}}}}{[P_{2^{r_{t+f}}},P_{2^{r_{t+f}}}]}$ then from the inclusion given in \req{iotabar} we obtain $\eta_f^{2^{r_{t+f}}}=\displaystyle\prod_{n_t+m_{f-1}+1\leq i<j\leq n_t+m_f}A_{i,j}$ in $B_n/[P_n,P_n]$ is a product of elements of $X_3$. 
 For other cases of $M_k$ we get $y_{j,k}=e$. 
\end{enumerate}

Hence $L$ can be generated by $C_1\cup X_2\cup X_3$, where $X_2$ and $X_3$ was given in \req{x2} and \req{x3}, respectively, and
\begin{align}\label{eq:a1}
C_1&=\left\{ (A_{1,2}\delta_{0,n_1})^{p_1^{r_1}},\, (A_{n_1+1,n_1+2}\delta_{n_1,p_2^{r_2}})^{p_2^{r_2}},\,\cdots,\, (A_{n_{t-1}+1,n_{t-1}+2}\delta_{n_{t-1},p_t^{r_t}})^{p_t^{r_t}}\right\}
\end{align}
Let $C_3=X_3$ (see \req{x3}).
Let $I$ denote the index set
\begin{align*}
I&=\set{(i,j)}{1\leq i<j\leq n_t \textrm{ with } (i,j)\neq (n_l+1, n_l+2) \textrm{ for } l=0,1,\cdots,t-1},
\end{align*}
and let 
\begin{equation}\label{eq:oddbasis}
C_2=\left\{ A_{r,s}^{d}\in X_2 \mid (r,s)\in I \right\}.
\end{equation}
Hence, the set
\begin{equation}\label{eq:basis}
C_1\cup C_2\cup C_3 
\end{equation}
is a basis for the free abelian group $L$.

\textbf{STEP 2}. 
Now, we prove that $\Gamma$ is torsion free. 
Suppose on the contrary that $w$ is a non-trivial torsion element of $\Gamma$. 
First we observe that, since $B_n/[P_n,P_n]$ has no $2$-torsion \cite[Theorem~2]{GGO} 
then, in case $w$ is a non-trivial finite order element its torsion is odd. Furthermore, since $L$ is torsion free then $w\notin L$ and it projects onto an element of odd order in $G_1$. Recall that $d=\operatorname{lcm}(p_1^{r_1}, p_2^{r_2},\ldots,p_t^{r_t})$.
Hence there exists $\theta \in L$ and for $0\leq l\leq t-1$ there exist $x_{l+1}\in \brak{0,1,\ldots,p_{l+1}^{r_{l+1}}-1}$
with $(x_1,x_2,\cdots,x_t)\neq (0,0,\cdots,0)$ such that 
\begin{equation}\label{eq:w}
w=\theta (A_{1,2}\delta_{0,n_1})^{x_1} (A_{n_1+1,n_1+2}\delta_{n_1,p_2^{r_2}})^{x_2}\cdots (A_{n_{t-1}+1,n_{t-1}+2}\delta_{n_{t-1},p_t^{r_t}})^{x_t}. 
\end{equation}
The elements $A_{i,j}$ with $n_t+1\leq i<j\leq n$ does not belong to $\theta$ because $w$ projects onto an element of odd order in $G_1$. 

Let $A=(A_{1,2}\delta_{0,n_1})^{x_1} (A_{n_1+1,n_1+2}\delta_{n_1,p_2^{r_2}})^{x_2}\cdots (A_{n_{t-1}+1,n_{t-1}+2}\delta_{n_{t-1},p_t^{r_t}})^{x_t}$. 
Recall, from the expression of $w$ given in \req{w}, that $(x_1,x_2,\cdots,x_t)\neq (0,0,\cdots,0)$, so at least one the $x_i$'s is different from zero. We will prove the case $x_1\neq 0$, and point out that the other cases follows in a similar way.

From $\delta_{0,p_1^{r_1}}^{p_1^{r_1}}=1$ in $B_n/[P_n,P_n]$ we conclude that
\begin{align*}
 (A_{1,2}\delta_{0,p_1^{r_1}})^{p_1^{r_1}}& = A_{1,2}\, \delta_{0,p_1^{r_1}}A_{1,2}\delta_{0,p_1^{r_1}}^{-1}\, \delta_{0,p_1^{r_1}}^{2}A_{1,2}\delta_{0,p_1^{r_1}}^{-2}\cdots \delta_{0,p_1^{r_1}}^{p_1^{r_1}-1} A_{1,2}\delta_{0,p_1^{r_1}}^{-(p_1^{r_1}-1)} \, \delta_{0,p_1^{r_1}}^{p_1^{r_1}} \\
&= A_{1,2}A_{1,p_1^{r_1}}A_{p_1^{r_1}-1,p_1^{r_1}}\cdots A_{2,3}.
\end{align*}
Therefore
\begin{equation}\label{eq:aq}
A^d= A_{1,2}^{\left(\frac{d}{p_1^{r_1}}\right)x_1}A_{1,p_1^{r_1}}^{\left(\frac{d}{p_1^{r_1}}\right)x_1}A_{p_1^{r_1}-1,p_1^{r_1}}^{\left(\frac{d}{p_1^{r_1}}\right)x_1}\cdots A_{2,3}^{\left(\frac{d}{p_1^{r_1}}\right)x_1} \cdot  
\prod_{l=1}^{t-1} (A_{n_l+1,n_l+2}\delta_{n_l,p_{l+1}^{r_{l+1}}})^{d x_{l+1}}
\end{equation}

Since $\theta\in L$ then using the basis of $L$ given in \req{basis} there are $\lambda_{i,j}\in \Z$ for $1\leq i<j\leq n$ such that $\theta$ is equal to the product
\begin{eqnarray}\label{eq:theta}
\theta=(A_{1,2}\cdots A_{2,3} )^{\lambda_{1,2}} \cdot\prod_{l=1}^{t-1} \left((A_{n_{l}+1,n_{l}+2}\delta_{n_{l},p_{l+1}^{r_{l+1}}})^{p_{l+1}^{r_{l+1}}}\right)^{\lambda_{n_{l}+1,n_{l}+2}}\cdot 
\prod_{(r,s)\in I}A_{r,s}^{d\lambda_{r,s}}\cdot 
\prod_{A_{i,j}\in A_3}A_{i,j}^{\lambda_{i,j}}
\end{eqnarray}
Hence, for $w=\theta A$, we obtain
\begin{equation}\label{eq:wq1}
1=w^d=(\theta A)^d=\theta\, (A \theta A^{-1})\, (A^2\theta A^{-2})\, \cdots (A^{d-1}\theta A^{-(d-1)})\, A^d.
\end{equation}
Comparing the coefficient of $A_{1,2}$ in the expression of $w^d$, within the assumption $1~=~w^d$, see \req{aq}, \req{theta} and \req{wq1}, we obtain the following equality, in which the expression $\lambda_{1,2}$ appears $d$ times 
\begin{equation*}
\lambda_{1,2}+(\lambda_{1,2}+d\lambda_{2,3})+\cdots+(\lambda_{1,2}+d\lambda_{1,p_1^{r_1}})+\frac{d}{p_1^{r_1}}x_1=0,
\end{equation*} 
that is equivalent to the equality 
$\lambda_{1,2}+\lambda_{2,3}+\cdots+\lambda_{1,p_1^{r_1}}=-\frac{x_1}{p_1^{r_1}}$,
which has no solution in $\Z$, because $p_1^{r_1}$ does not divides $x_1$ since $x_1<p_1^{r_1}$. 
It follows that $\Gamma$ is torsion free, and so is a Bieberbach group of dimension $\frac{n(n-1)}{2}$ with holonomy group $G\subseteq \sn$.

  \item Suppose that $q$ is odd. The proof is similar to the case 
  given in item $(ii)$, so we just will appoint some minor details of the proof. 
  Since $q$ is odd we may suppose that $G=G_1$, where $G_1$ was given in \req{g1}. 
 Let $\Gamma$ be the subgroup of $\frac{B_n}{[P_n,P_n]}$ generated by $X_1$, where 
 $X_1$ is the set given in \req{x1}. 
 
 As in item $(ii)$ let $L=\Gamma\cap \operatorname{Ker}(\overline{\sigma})$. Then, taking the set of coset 
 representatives $M$ of $L$ in $\Gamma$, using only the expresion given in \req{lambdaodd} 
 and using the Reidemeister-Schreier method we may conclude that a basis for $L$ is the set $A_1\cup A_2$, see \req{a1} and \req{oddbasis}. 
The proof that $\Gamma$ is torsion free is in practice the same as the one given in 
\textbf{STEP 2} of item~$(ii)$.

\end{enumerate}

It follows that, in any case, given a finite abelian group $G$ there exists an integer $n$ and 
a Bieberbach group $\Gamma\subseteq B_n/[P_n,P_n]$ of dimension $\frac{n(n-1)}{2}$ with holonomy group $G\subseteq \sn$.

 \item Let $G$ be a finite abelian group. From item (\ref{it:akthma}) there is an integer $n$ 
 and a Bieberbach subgroup $\Gamma_G$ of $B_n/[P_n,P_n]$ of dimension $n(n-1)/2$ with 
 holonomy group $G\subseteq \sn$. 
 By applying \cite[Theorem~1]{AK} we have that there exists a manifold ${\cal X}_{\Gamma_G}$ with fundamental group $\Gamma_G$ and holonomy group $G$.
\end{enumerate}
\end{proof}

\begin{rem}
We note that, for any finite abelian group $G\leq \sn$, in \reth{akthm} we exhibit a Bieberbach group $\Gamma_G\leq B_n/[P_n,P_n]$ of dimension $n(n-1)/2$, with holonomy group $G$ and such that we may describe explicitly the holonomy representation using \repr{1}. 
It is very useful to have an explicit description of the holonomy representation of the Bieberbach group $\Gamma_G$ to deduce some geometric properties of the flat manifold ${\cal X}_{\Gamma_G}$ with fundamental group $\Gamma_G$. For instance, to see this, in  \resec{zq} we will explore in more details the Bieberbach subgroup $\Gamma_G$ of $B_n/[P_n,P_n]$ in the case in which $G$ is a cyclic group of odd order.
\end{rem}

Let $H=\Z_{n_1}\times \Z_{n_2}\times \cdots \times \Z_{n_t}$ be a finite abelian group, where $n_1,\ldots,n_t$ are powers of (not necessarily distinct) primes. Let $m$ be an integer such that $\sum_{i=1}^{t} \, n_i\leq m$. We may exhibit other Bieberbach groups with holonomy group $H$ in $\frac{B_{m}}{[P_{m},\ P_{m}]}$ of lower dimension than $m(m-1)/2$ as showed in the following result.  Let $\binom{n}{k}$ denote the binomial coefficient.

\begin{prop}\label{prop:inclusion}
Let $H=\Z_{n_1}\times \Z_{n_2}\times \cdots \times \Z_{n_t}$ be any finite abelian group, where $n_1,\ldots,n_t$ are powers of (not necessarily distinct) primes. Let $m$ be an integer such that $\sum_{i=1}^{t} \, n_i\leq m$.
Then there exist a Bieberbach subgroup of $\frac{B_{m}}{[P_{m},\ P_{m}]}$ of dimension $\sum_{i=1}^{t} \, \binom{n_i}{2}\leq \binom{m}{2}$ with holonomy group $H$. 
\end{prop}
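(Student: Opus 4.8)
The plan is to build the desired Bieberbach group as an internal product of the Bieberbach groups constructed in \reth{akthm} for each cyclic factor, transported into $B_m/[P_m,P_m]$ via the injection $\overline{\iota}$ of \reth{inclusion}. First I would handle a single cyclic factor $\Z_{n_i}$, where $n_i=p_i^{r_i}$ is a prime power. If $p_i=2$, \cite[Theorem~20]{GGO} already gives a Bieberbach subgroup of $B_{n_i}/[P_{n_i},P_{n_i}]$ of dimension $\binom{n_i}{2}=n_i(n_i-1)/2$ with holonomy $\Z_{2^{r_i}}$ (namely $\widetilde{H}_{n_i}$); if $p_i$ is odd, the case (c) (the ``$q$ odd'' case with a single factor) of the proof of \reth{akthm} gives a Bieberbach subgroup $\Gamma_{(i)}\leq B_{n_i}/[P_{n_i},P_{n_i}]$ of dimension $\binom{n_i}{2}$ with holonomy $\Z_{p_i^{r_i}}$, generated by the single element $A_{1,2}\delta_{0,n_i}$ together with the translation lattice $L_{(i)}$ described there (basis $C_1\cup C_2$, which has $1 + (\binom{n_i}{2}-1) = \binom{n_i}{2}$ elements). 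In either case the dimension is exactly $\binom{n_i}{2}$, as the crystallographic dimension of $B_{n_i}/[P_{n_i},P_{n_i}]$ is $n_i(n_i-1)/2$ and the constructed Bieberbach group has full-rank translation lattice.

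Next I would form the direct product $\Gamma_{(1)}\times\cdots\times\Gamma_{(t)}$ inside $\prod_{i=1}^t B_{n_i}/[P_{n_i},P_{n_i}]$ and push it forward under $\overline{\iota}$ of \reth{inclusion} (applicable since $\sum n_i\le m$ and each $n_i\ge 3$; the degenerate cases $n_i=1,2$ are trivial or handled directly, as remarked after \reth{inclusion}). Since $\overline{\iota}$ is injective, the image $\Gamma\leq B_m/[P_m,P_m]$ is isomorphic to the product. I would then check the two defining properties. Torsion-freeness: any torsion element of $\Gamma$ pulls back to a torsion element of the product, hence to a tuple of torsion elements in the $\Gamma_{(i)}$, each of which is trivial since the $\Gamma_{(i)}$ are Bieberbach; so $\Gamma$ is torsion free. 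Holonomy: under $\overline{\sigma}$ the image of $\Gamma$ is the internal product of the images of the $\Gamma_{(i)}$, which act on disjoint blocks $\{n_{i-1}+1,\dots,n_i\}$ of $\{1,\dots,m\}$, so they generate their direct product $\prod_i \Z_{n_i}=H$, and $H\leq S_m$ acts faithfully as required by \relem{DefC2}.

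Finally I would compute the dimension. The translation subgroup $L=\Gamma\cap\ker(\overline{\sigma})$ corresponds under $\overline{\iota}$ to $\prod_i L_{(i)}$, whose rank is $\sum_i \binom{n_i}{2}$; this is the dimension of the Bieberbach group $\Gamma$ by \relem{DefC2}. That $\sum_i\binom{n_i}{2}\leq\binom{m}{2}$ is the elementary inequality: partition $\{1,\dots,m\}$ into blocks of sizes $n_1,\dots,n_t$ (plus possibly a leftover block), and note $\binom{m}{2}$ counts all pairs while $\sum_i\binom{n_i}{2}$ counts only within-block pairs. I expect the only mildly delicate point to be bookkeeping the rank of $L_{(i)}$ in the odd-prime case — confirming that the basis $C_1\cup C_2$ produced by the Reidemeister–Schreier computation in the proof of \reth{akthm} indeed has exactly $\binom{n_i}{2}$ elements, i.e. that $L_{(i)}$ has full rank in $P_{n_i}/[P_{n_i},P_{n_i}]\cong\Z^{\binom{n_i}{2}}$ — but this is immediate from the explicit description there, since $C_2$ accounts for all index pairs in $I$ and $C_1$ supplies the one remaining coordinate direction. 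Everything else is a routine assembly of already-established facts.
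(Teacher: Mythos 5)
Your proposal follows essentially the same route as the paper's own proof: apply \reth{akthm} to each cyclic factor $\Z_{n_i}$ to get a Bieberbach group $\Gamma_{(i)}\leq B_{n_i}/[P_{n_i},P_{n_i}]$ of dimension $\binom{n_i}{2}$, and push the direct product forward under the injection $\overline{\iota}$ of \reth{inclusion}; the additional verifications you supply (torsion-freeness of the product, faithfulness of the block-diagonal holonomy action, the rank count for $L$) are correct details that the paper leaves implicit. The only point worth noting is that both you and the paper gloss over factors with $n_i=2$, which \reth{inclusion} excludes and for which the one-dimensional block cannot carry a faithful $\Z_2$-holonomy; this is harmless under the implicit assumption $n_i\geq 3$.
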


\begin{proof}
Let $H=\Z_{n_1}\times \Z_{n_2}\times \cdots \times \Z_{n_t}$ be a finite abelian group, where $n_1,\ldots,n_t$ are powers of (not necessarily distinct) primes and let $m$ be an integer such that $\sum_{i=1}^{t} \, n_i\leq~m$.
From \reth{inclusion} we have that $\map{\overline{\iota}}{\displaystyle\frac{B_{n_1}}{[P_{n_1},P_{n_1}]}\times \cdots \times \frac{B_{n_t}}{[P_{n_t},P_{n_t}]}}[\frac{B_{m}}{[P_{m},P_{m}]}]$ is an  injective homomorphism. 
Using \reth{akthm} we construct a Bieberbach group $\Gamma_{\Z_{n_k}}$ in $\frac{B_{n_k}}{[P_{n_k},\ P_{n_k}]}$ of dimension $\binom{n_k}{2}$, for all $1\leq k\leq t$. Hence, $\overline{\iota}(\Gamma_{\Z_{n_1}}\times \cdots \times \Gamma_{\Z_{n_t}})\leq \frac{B_{m}}{[P_{m},\ P_{m}]}$ is a Bieberbach group of dimension $\sum_{i=1}^{t} \, \binom{n_i}{2}\leq \binom{m}{2}$ with holonomy group $H$.
\end{proof}

\section{$\Z_{q}$-manifolds for $q$ an odd number}\label{sec:zq}

Let $q=p_1^{r_1} p_2^{r_2}\cdots p_t^{r_t}$ be an odd number, where $\brak{p_1,\ldots,p_t}$ are distinct odd primes and $r_i\geq 1$ for all $1\leq i\leq t$. 
We shall consider the cyclic group $G$ of order $q$ isomorphic to $\Z_{p_1^{r_1}}\times \Z_{p_2^{r_2}}\times \cdots \times \Z_{p_t^{r_t}}$. 
For $1\leq l\leq t$, let $n_l=\sum_{i=1}^lp_i^{r_i}$.
From \cite[Remark]{Ho} $G$ is a subgroup of $\sn$, with $n=n_t$. 
For convention, let $n_0=0$. 
Let $\Gamma_{\Z_{q}}\leq B_{n}/[P_{n}, P_{n}]$ be the Bieberbach group constructed in \reth{akthm}. 
For short we just write $\Gamma_{q}$ to indicate the group $\Gamma_{\Z_{q}}$.
In this section we study the existence of Anosov diffeomorphisms for flat manifolds $\mathcal{X}_{\Gamma_{q}}$ with cyclic holonomy $\Z_{q}$ and fundamental group $\Gamma_{q}\leq B_{n}/[P_{n}, P_{n}]$, a quotient of the Artin braid group. We also give a presentation of the Bieberbach group $\Gamma_{q}$ and a set of generators of its center.
The Bieberbach group $\Gamma_{q}$ fits into a short exact sequence
$
1\to \Z^{\frac{n(n-1)}{2}} \to \Gamma_{q} \to \Z_{q} \to 1
$
and have holonomy representation
\begin{equation}\label{eq:repzq}
\psi_{q}\colon \Z_{q}\to \operatorname{Aut}\left(\Z^{\frac{n(n-1)}{2}}\right)
\end{equation}
induced from the action by conjugacy of $\Z_{q}$ over the group $\Z^{\frac{n(n-1)}{2}}$.

\subsection{Holonomy representation as a matrix representation and proof of \reth{mainzpr}}\label{subsec:matrix}

In the proof of \reth{akthm} were considered the subsets of $P_n/[P_n,P_n]$
\begin{align*}
A_1&=\left\{ (A_{1,2}\delta_{0,n_1})^{p_1^{r_1}},\, (A_{n_1+1,n_1+2}\delta_{n_1,p_2^{r_2}})^{p_2^{r_2}},\,\cdots,\, (A_{n_{t-1}+1,n_{t-1}+2}\delta_{n_{t-1},p_t^{r_t}})^{p_t^{r_t}}\right\}\\
A_2&=\left\{ A_{r_l,s_l}^{q} \mid (r_l,s_l)\in I \right\}
\end{align*} 
where $I$ denote the index set
\begin{align*}
I&=\set{(i,j)}{1\leq i<j\leq n \textrm{ with } (i,j)\neq (n_l+1, n_l+2) \textrm{ for } l=0,1,\cdots,t-1}.
\end{align*}
From the proof of \reth{akthm} we know that ${\cal B} = A_1\cup A_2$
is a basis of the free abelian group $L=\Z^{\frac{n(n-1)}{2}}$. 
For $0\leq l\leq t-1$ we consider the element $\delta_{n_l,\, p_{l+1}^{r_{l+1}}}$ defined in \req{delta}. 
Let 
\begin{equation*}
\delta=\prod_{0\leq l\leq t-1}\delta_{n_l,\, p_{l+1}^{r_{l+1}}}.
\end{equation*}
The element $\delta$ projects via $\overline{\sigma}\colon B_n/[P_n,P_n]\to \sn$ onto a generator of a cyclic subgroup (of order $q$) of $\sn$. 
Notice that the action of $\Z_{q}$ over the basis ${\cal B}$ may be computed using \repr{1}. Also note that 
$$
\frac{n(n-1)}{2}=\frac{(p_1^{r_1}+\cdots p_t^{r_t})(p_1^{r_1}+\cdots p_t^{r_t}-1)}{2}=\sum_{i=1}^t\frac{p_i^{r_i}(p_i^{r_i}-1)}{2} + \sum_{1\leq \ell<s\leq t}p_{\ell}^{r_{\ell}}p_s^{r_s}.
$$
 Now we order the elements of the basis ${\cal B}$ using the action of $\Z_{q}$ over ${\cal B}$:
\begin{itemize}
\item For $1\leq j\leq t$ let
$$
e_{j,0,1}=\left(A_{n_{j-1}+1,n_{j-1}+2}\delta_{n_{j-1},p_{j}^{r_{j}}}\right)^{p_{j}^{r_{j}}}
$$ 
and let
$$
e_{j,0,k+1}=\delta^{k-1} \left(A_{n_{j-1}+2,n_{j-1}+3}\right)^{q} \delta^{-(k-1)}
$$
for $1\leq k\leq p_j^{r_j}-1$. The element $e_{j,0,1}$ is fixed by the action for all $1\leq j\leq t$. 
Since $\left(A_{n_{j-1}+1,n_{j-1}+2}\right)^{q}$ does not belong to ${\cal B}$ then we rewrite it using elements of ${\cal B}$ 
\begin{align*}
\left(A_{n_{j-1}+1,n_{j-1}+2}\right)^{q} &=e_{j,0,1}^{q}
\prod_{1\leq k\leq p_j^{r_j}-1} \left(\delta^{k-1} \left(A_{n_{j-1}+2,n_{j-1}+3}\right)^{q} \delta^{-(k-1)}\right)^{-1}\\
&=e_{j,0,1}^{q}\cdot\displaystyle\prod_{1\leq k\leq p_j^{r_j}-1}e_{j,0,k+1}^{-1}
\end{align*}
Hence, in this case, the action may be described using an arrows diagram as 
$e_{j,0,2}\mapsto e_{j,0,3}\mapsto e_{j,0,4}\mapsto \cdots \mapsto e_{j,0,p_j^{r_j}-1}\mapsto e_{j,0,p_j^{r_j}}\mapsto e_{j,0,1}^{q}\cdot\displaystyle\prod_{1\leq k\leq p_j^{r_j}-1}e_{j,0,k+1}^{-1}$ and $e_{j,0,1}\mapsto e_{j,0,1}$, for all $1\leq j\leq t$.

\item For each $1\leq j\leq t$ there are $\frac{p_j^{r_j}-3}{2}$ orbits of size $p_j^{r_j}$ given by
$$
e_{j,h,k}=\delta^{k-1} A_{j,h+2}^{q} \delta^{-(k-1)} 
$$
for $1\leq k\leq p_j^{r_j}$  and $1\leq h\leq \frac{p_j^{r_j}-3}{2}$.

\item For $t+1\leq j<h \leq 2t$ let
$$
e_{j,h,k}=\delta^{k-1} A_{n_{j-t-1}+1,n_{h-t-1}+1}^{q} \delta^{-(k-1)} 
$$
for $1\leq k\leq p_j^{r_j}p_h^{r_h}$.
\end{itemize}
 
Consider the following (lexicographic) ordered basis of $L=\Z^{\frac{n(n-1)}{2}}$
\begin{align}\label{eq:basisB}
{\cal B}=&\set{e_{j,h,k}}{1\leq j\leq t,\, 0\leq h\leq \frac{p_j^{r_j}-3}{2},\, 1\leq k\leq p^r}\\
& \cup \set{e_{j,h,k}}{1\leq j<h \leq t,\, 1\leq k\leq p_j^{r_j}p_h^{r_h}}.\nonumber
\end{align}
Let  
$$
{\cal N}_{z}[q]=\left(
\begin{array}
[c]{ccccccc}%
1 & 0 & 0 & \cdots & 0 & 0 & q\\
0 & 0 & 0 & \cdots & 0 & 0 & -1\\
0 & 1 & 0 & \cdots & 0 & 0 & -1\\
0 & 0 & 1 & \cdots & 0 & 0 & -1\\
\vdots & \vdots & \vdots & \ddots & \vdots & \vdots & \vdots\\
0 & 0 & 0 & \cdots & 1 & 0 & -1\\
0 & 0 & 0 & \cdots & 0 & 1 & -1
\end{array}
\right)
\textrm{ and }
{\cal M}_{z}=\left(
\begin{array}
[c]{ccccccc}%
0 & 0 & 0 & \cdots & 0 & 0 & 1\\
1 & 0 & 0 & \cdots & 0 & 0 & 0\\
0 & 1 & 0 & \cdots & 0 & 0 & 0\\
0 & 0 & 1 & \cdots & 0 & 0 & 0\\
\vdots & \vdots & \vdots & \ddots & \vdots & \vdots & \vdots\\
0 & 0 & 0 & \cdots & 1 & 0 & 0\\
0 & 0 & 0 & \cdots & 0 & 1 & 0
\end{array}
\right)
$$
square matrices of order $z$. 
Therefore, using the ordered basis ${\cal B}$ of $L=\Z^{\frac{n(n-1)}{2}}$, we may conclude from the information above that we obtain the following matrix description of the holonomy representation of $\Gamma_{q}$ given in \req{repzq}.

\begin{thm}\label{th:mainrep}
Let $\Gamma_{q}$ be the Bieberbach group constructed in \reth{akthm} with holonomy group $\Z_q$ and holonomy representation	$\psi_{q}\colon \Z_{q}\to \operatorname{Aut}\left(\Z^{\frac{n(n-1)}{2}}\right)$.
	Let ${\cal B}$ be  the ordered basis of the free Abelian group $L=\Z^{\frac{n(n-1)}{2}}$ given in \req{basisB}. Then we obtain the following matrix decomposition of $\psi_{q}$:
\begin{equation}\label{eq:matrixzpr}
\lbrack\psi_{q}(\theta)]_{{\cal B}}=
\bigoplus_{1\leq j\leq t}\left({\cal N}_{p_j^{r_j}}[q]\oplus \left[\bigoplus_{1\leq h\leq \frac{p_j^{r_j}-3}{2}}{\cal M}_{p_j^{r_j}}\right] \right)
\oplus \left(\bigoplus_{1\leq j<h \leq t} {\cal M}_{p_j^{r_j}p_h^{r_h}}\right)
\end{equation}
In the case $q=3$ \req{matrixzpr} has the form $\lbrack\psi_{3}(\theta)]_{{\cal B}}={\cal N}_{3}[3]$.
The characteristic polynomial of $\lbrack\psi_{q}(\theta)]_{{\cal B}}$ is
$$
\prod_{1\leq j\leq t} \left(x^{p_j^{r_j}}-1\right)^{\frac{p_j^{r_j}-1}{2}}\cdot \prod_{1\leq j<h \leq t} \left(x^{p_j^{r_j}p_h^{r_h}}-1\right)
$$ 
\end{thm}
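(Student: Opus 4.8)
The plan is to read the matrix $[\psi_q(\theta)]_{\mathcal{B}}$ off the action of $\Z_q=\langle\delta\rangle$ on the ordered basis $\mathcal{B}$ of $L=\Z^{n(n-1)/2}$ — an action recorded by the arrow diagrams preceding the statement — and then to compute its characteristic polynomial block by block. Recall that $\psi_q$ is induced by conjugation by $\delta=\prod_{0\le l\le t-1}\delta_{n_l,p_{l+1}^{r_{l+1}}}$, that $\overline{\sigma}(\delta)$ is a product of $t$ disjoint cycles of lengths $p_1^{r_1},\ldots,p_t^{r_t}$, and that by \repr{1} conjugation by $\delta$ carries $A_{i,j}$ to $A_{\pi(i),\pi(j)}$ for $\pi$ the permutation induced by $\delta^{-1}$; this descends to $L$ and permutes the vectors of $\mathcal{B}$, up to the rewriting rules already established in the proof of \reth{akthm}.

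First I would record the blocks coming from the cyclic families. For $1\le j\le t$ and $1\le h\le\frac{p_j^{r_j}-3}{2}$ the vectors $e_{j,h,1},\ldots,e_{j,h,p_j^{r_j}}$ form a single $\langle\delta\rangle$-orbit on which $\delta$ acts by the cyclic shift $e_{j,h,k}\mapsto e_{j,h,k+1}$ (indices modulo $p_j^{r_j}$), because the underlying pair of indices lies inside one cycle of $\overline{\sigma}(\delta)$, of length $p_j^{r_j}$; hence the corresponding block of $[\psi_q(\theta)]_{\mathcal{B}}$ is the permutation matrix $\mathcal{M}_{p_j^{r_j}}$ of a $p_j^{r_j}$-cycle. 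For $1\le j<h\le t$ the family $e_{j,h,1},\ldots,e_{j,h,p_j^{r_j}p_h^{r_h}}$ behaves the same way, the difference being that the underlying pair now has its two entries in distinct cycles, of coprime lengths $p_j^{r_j}$ and $p_h^{r_h}$, so its $\langle\delta\rangle$-orbit has full size $\operatorname{lcm}(p_j^{r_j},p_h^{r_h})=p_j^{r_j}p_h^{r_h}$ and the corresponding block is $\mathcal{M}_{p_j^{r_j}p_h^{r_h}}$.

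The one block requiring attention is the one spanned by $e_{j,0,1},\ldots,e_{j,0,p_j^{r_j}}$. Here $e_{j,0,1}$, which equals the $\delta$-orbit product $(A_{n_{j-1}+1,n_{j-1}+2}\delta_{n_{j-1},p_j^{r_j}})^{p_j^{r_j}}$, is $\delta$-fixed, giving the first column of $\mathcal{N}_{p_j^{r_j}}[q]$; the remaining vectors satisfy $e_{j,0,k}\mapsto e_{j,0,k+1}$ for $2\le k\le p_j^{r_j}-1$, while $e_{j,0,p_j^{r_j}}\mapsto(A_{n_{j-1}+1,n_{j-1}+2})^q$, which is not a member of $\mathcal{B}$ and must be rewritten as $e_{j,0,1}^q\prod_{k=1}^{p_j^{r_j}-1}e_{j,0,k+1}^{-1}$, producing the last column $(q,-1,-1,\ldots,-1)^{T}$ — exactly the last column of $\mathcal{N}_{p_j^{r_j}}[q]$. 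Assembling all the blocks over $j$ and over the pairs $j<h$ then gives \reqref{matrixzpr}, and for $q=3$ one has $t=1$ and $p_1^{r_1}=3$, so the only block is $\mathcal{N}_3[3]$. The only (mildly delicate) obstacle is keeping the indexing and the cycle orientations straight while verifying this wrap-around column; everything else is bookkeeping already done before the statement.

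Finally, for the characteristic polynomial I would use that $\det(xI-A\oplus B)=\det(xI-A)\det(xI-B)$ together with two elementary computations. The matrix $\mathcal{M}_z$ is the permutation matrix of a $z$-cycle, so $\det(xI-\mathcal{M}_z)=x^z-1$. For $\mathcal{N}_z[q]$ the first basis vector is fixed and the endomorphism induced on the rank-$(z-1)$ quotient lattice is $\overline{e}_2\mapsto\overline{e}_3\mapsto\cdots\mapsto\overline{e}_z\mapsto-(\overline{e}_2+\cdots+\overline{e}_z)$ (writing $\overline{e}_i$ for the class of $e_i$), that is, the companion matrix of $1+x+\cdots+x^{z-1}$; hence $\det(xI-\mathcal{N}_z[q])=(x-1)(1+x+\cdots+x^{z-1})=x^z-1$, independently of $q$. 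Multiplying these over the summands of \reqref{matrixzpr} and using $1+\frac{p_j^{r_j}-3}{2}=\frac{p_j^{r_j}-1}{2}$ yields
\[
\prod_{1\le j\le t}\bigl(x^{p_j^{r_j}}-1\bigr)^{\frac{p_j^{r_j}-1}{2}}\cdot\prod_{1\le j<h\le t}\bigl(x^{p_j^{r_j}p_h^{r_h}}-1\bigr),
\]
as asserted.
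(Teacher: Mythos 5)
Your proposal is correct and follows essentially the same route as the paper: the matrix form is read off the orbit description of the conjugation action on the ordered basis $\mathcal{B}$ (including the wrap-around rewriting that produces the last column of ${\cal N}_{z}[q]$), and the characteristic polynomial is computed blockwise, with ${\cal M}_z$ the companion matrix of $x^z-1$ and $\det(xI-{\cal N}_z[q])=(x-1)(1+x+\cdots+x^{z-1})=x^z-1$ via the fixed first basis vector. Your write-up is in fact more detailed than the paper's, which simply cites the preceding orbit computations for the block decomposition.
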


\begin{proof}
The matrix decomposition of $\psi_{q}$ given in \req{matrixzpr} follows from the description of the action by conjugacy of $\Z_q$ over ${\cal B}$ given in the first part of this subsection. Rest to compute the characteristic polynomial of $\lbrack\psi_{q}(\theta)]_{{\cal B}}$. 
Removing the first line and column of the matrix ${\cal N}_{z}[q]$ we obtain the companion matrix of the monic polynomial $1+x+x^2+\cdots +x^{z-1}$ , so the characteristic polynomial of the matrix ${\cal N}_{z}[q]$ is $(x-1)(1+x+x^2+\cdots +x^{z-1})=x^{z}-1$. The matrix ${\cal M}_{z}$ is the companion matrix of the monic polynomial $x^{z}-1$. The result follows from these observations.
\end{proof}

We shall use \reth{mainrep} to prove \reth{mainzpr}, but first we need some definitions.

\begin{defn}
A diffeomorphism $f\colon M \rightarrow M$ from a Riemannian manifold into itself
is called \textit{Anosov} and that $M$ has an hyperbolic structure if the
following condition is satisfied: there exists a splitting of the tangent
bundle $T(M)=E^{s}+E^{u}$ such that $Df\colon E^{s} \rightarrow E^{s}$ is
contracting and $Df\colon E^{u} \rightarrow E^{u}$ is expanding. 
\end{defn}

Classification of all compact manifolds supporting Anosov diffeomorphism
play an important r\^ole in the theory of dynamical systems, that notion
represents a kind of global hyperbolic behavior, and provides examples of
stable dynamical systems. 
The problem of classifying those compact manifolds (up to diffeomorphism) which admit an Anosov diffeomorphism was first proposed by S.~Smale \cite{Sm}. H.~Porteous gave an interesting algebraic characterisation of the existence of Anosov diffeomorphisms for flat manifolds that just depends on the holonomy representation, see \cite[Theorems 6.1 and 7.1]{P}.
Now, using the information above, we are able to prove \reth{mainzpr}.

\begin{proof}[Proof of \reth{mainzpr}]
Let $q=p_1^{r_1} p_2^{r_2}\cdots p_t^{r_t}$ be an odd number, where $p_i^{r_i}$ are distinct odd primes and $r_i\geq 1$ for all $1\leq i\leq t$. 
From \reth{akthm} let $\mathcal{X}_{\Gamma_{q}}$ be the  flat manifold of dimension $\frac{n(n-1)}{2}$ with fundamental group 
$\Gamma_{q}\leq B_{n}/[P_{n}, P_{n}]$ and holonomy group $\Z_{q}$. 

\begin{enumerate}

\item Since the determinant of the matrix (representation) given in \req{matrixzpr} is equal to 1 then follows from \rerem{orientable} that the flat manifold $\mathcal{X}_{\Gamma_{q}}$ is orientable.

\item From \cite[Theorems~6.4.12 and 6.4.13]{Dekimpe} we may compute the first Betti number of $\mathcal{X}_{\Gamma_{q}}$ as 
$\beta_1(\mathcal{X}_{\Gamma_{q}})=\frac{n(n-1)}{2}-\operatorname{rank}\left(  [\psi_{\Gamma_{q}}(\theta)]_{\cal B}-I \right)= \displaystyle\sum_{i=1}^t\frac{p_i^{r_i}-1}{2} + \frac{t(t-1)}{2}$
where $[\psi_{\Gamma_{q}}(\theta)]_{\cal B}$  is the matrix given in \req{matrixzpr} and $I$ is the identity matrix of order $\frac{n(n-1)}{2}$.

\item Since $q$ is odd then $-1$, $i$, $-i$ are not eigenvalues of the matrix $\lbrack\psi_{q}(\theta)]_{{\cal B}}$. 
In \reth{mainrep} we computed the characteristic polynomial of $\lbrack\psi_{q}(\theta)]_{{\cal B}}$ and from it we conclude that if the numbers $1$,  $\omega$, $\omega^{2}$, $-\omega$ and $-\omega^{2}$ (where $\omega^{3}=1$) are eigenvalues of the matrix $\lbrack\psi_{q}(\theta)]_{{\cal B}}$ then they have multiplicity greater than one, except in the case in which $q=3$. In the case $q=3$ the eigenvalues of the matrix $\lbrack\psi_{3}(\theta)]_{{\cal B}}$ are the cubical roots of the unity, with multiplicity one. Hence, from \cite[Theorem~7.1]{P} the flat manifold $\mathcal{X}_{\Gamma_{q}}$ admits Anosov diffeomorphism if and only if $q\neq 3$.
\end{enumerate}
\end{proof}

Now we use Johnson's method \cite{Johnson} to give a presentation for the group $\Gamma_{q}$ of \reth{mainzpr}.

\begin{prop}\label{prop:pres}
Let $\Z_{q}\subseteq \sn$. The Bieberbach group $\Gamma_{q}$ fits into a short exact sequence
\begin{equation*}
1 \to  \Z^{\frac{n(n-1)}{2}} \to  \Gamma_{q} \stackrel{\overline{\sigma}}{\to} \Z_{q} \to 1
\end{equation*}
and have a presentation given by:
\begin{enumerate}
\item Generators: 
\begin{enumerate}
\item $e_{j,h,k}\in {\cal B}$ (see \req{basisB}),
\item $A_{1,2}\delta_{0,n_1},\, A_{n_1+1,n_1+2}\delta_{n_1,p_2^{r_2}},\,\cdots,\, A_{n_{t-1}+1,n_{t-1}+2}\delta_{n_{t-1},p_t^{r_t}}$.
\end{enumerate}

\item Relations:
\begin{enumerate}
\item $[e_{j,h,k},\, e_{r,u,s}]=1$, for all $e_{j,h,k},\, e_{r,u,s}\in {\cal B}$,

\item $\left(A_{n_{j-1}+1,n_{j-1}+2}\delta_{n_{j-1},p_{j}^{r_{j}}}\right)^{p_{j}^{r_{j}}}=e_{j,0,1}$, for $1\leq j\leq t$.

\item 
\begin{align*}
&\left(A_{n_{j-1}+1,n_{j-1}+2}\delta_{n_{j-1},p_{j}^{r_{j}}}\right) e_{j,h,k}\left(A_{n_{j-1}+1,n_{j-1}+2}\delta_{n_{j-1},p_{j}^{r_{j}}}\right)^{-1}\\
&=
\begin{cases}
e_{j,h,k}, & \textrm{if } 1\leq j\leq t,\, h=0,\, k=1\\
e_{j,0,1}^{q}\cdot\displaystyle\prod_{1\leq k\leq p_j^{r_j}-1}e_{j,0,k+1}^{-1}, & \textrm{if } 1\leq j\leq t,\, h=0,\, k=p_j^{r_j}\\
e_{j,0,k+1}, & \textrm{if } 1\leq j\leq t,\, h=0,\, 2\leq k\leq p_j^{r_j}-1\\
e_{j,h,k+1}, & \textrm{if } 1\leq j\leq t,\, 1\leq h\leq \frac{p_j^{r_j}-3}{2},\, 1\leq k\leq p_j^{r_j}\\
e_{j,h,k+1}, & \textrm{if }  t+1\leq j<h \leq 2t,\, 1\leq k\leq p_j^{r_j}p_h^{r_h}\end{cases}
\end{align*}
\end{enumerate}
\end{enumerate}
taking $k+1 (mod p_j^{r_j})$ or $k+1 (mod p_j^{r_j}p_h^{r_h})$ in the respective case.
\end{prop}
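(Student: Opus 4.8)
The plan is to apply Johnson's method for presenting a group extension \cite{Johnson} to the short exact sequence $1\to \Z^{n(n-1)/2}\to \Gamma_q\stackrel{\overline{\sigma}}{\to}\Z_q\to 1$. Johnson's recipe requires a presentation of the kernel, a presentation of the quotient, a set-theoretic lift of the generators of the quotient, the induced action of the quotient on the kernel, and the values in the kernel of the lifts of the quotient relators; the extension is then presented by the (lifts of the) generators of quotient and kernel, subject to the kernel relations, the action relations $\widehat{t}\,e\,\widehat{t}^{-1}={}^{t}e$, and the relator-value relations $\widehat{r}=w_r$. Here the kernel $L=\Z^{n(n-1)/2}$ is free abelian with the basis ${\cal B}$ of \req{basisB}, hence presented by the $e_{j,h,k}$ subject only to the commutators $[e_{j,h,k},e_{r,u,s}]=1$, which are exactly the relations (ii)(a). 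For the quotient we use $\Z_q\cong\Z_{p_1^{r_1}}\times\cdots\times\Z_{p_t^{r_t}}$ with generators $t_1,\dots,t_t$ and relators $t_j^{p_j^{r_j}}$ and $[t_i,t_j]$, and as lifts we take $\widehat{t}_j=A_{n_{j-1}+1,n_{j-1}+2}\,\delta_{n_{j-1},p_j^{r_j}}$, the element listed in (i)(b), which by construction maps under $\overline{\sigma}$ to a generator of the $j$-th cyclic factor. This accounts for the generating set in the statement.

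Next I would record the action. This is exactly the computation carried out just before \reth{mainrep}: by \repr{1}, conjugation by $\delta=\prod_{l}\delta_{n_l,p_{l+1}^{r_{l+1}}}$ — and, on the strands that matter, conjugation by the individual lift $\widehat{t}_j$ agrees with conjugation by $\delta_{n_{j-1},p_j^{r_j}}$ — cyclically permutes the elements $A_{i,j}^{q}$ and their $\delta$-conjugates making up ${\cal B}$. The only subtlety is on the orbit through $A_{n_{j-1}+1,n_{j-1}+2}^{q}$: this element is \emph{not} in ${\cal B}$, and it is re-expressed by $\big(A_{n_{j-1}+1,n_{j-1}+2}\big)^{q}=e_{j,0,1}^{\,q}\prod_{1\le k\le p_j^{r_j}-1}e_{j,0,k+1}^{-1}$, exactly as in the construction of ${\cal B}$. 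Feeding this into the cyclic shift produces the five cases of relation (ii)(c), with $k+1$ read modulo $p_j^{r_j}$ or $p_j^{r_j}p_h^{r_h}$. One also uses here that $\widehat{t}_j$ fixes every basis element supported on strands disjoint from its own support, which is immediate from \reth{inclusion} since the braid factors $B_{p_i^{r_i}}$ pairwise commute inside $B_n/[P_n,P_n]$.

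Then I would evaluate the lifted relators. For $t_j^{p_j^{r_j}}$, the element $\widehat{t}_j^{\,p_j^{r_j}}=(A_{n_{j-1}+1,n_{j-1}+2}\delta_{n_{j-1},p_j^{r_j}})^{p_j^{r_j}}$ lies in $L$, and expanding the power while using $\delta_{n_{j-1},p_j^{r_j}}^{p_j^{r_j}}=1$ — the same manipulation already performed in the proof of \reth{akthm} — gives a product of $A_{i,j}$'s which, by the very definition of $e_{j,0,1}$, equals $e_{j,0,1}$; this is relation (ii)(b). For the relator $[t_i,t_j]$ with $i\neq j$, the lift $[\widehat{t}_i,\widehat{t}_j]$ lies in the intersection of the two commuting braid subgroups provided by \reth{inclusion}, hence is trivial in $B_n/[P_n,P_n]$, so it contributes no relation (which is why none appears in the statement). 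Collecting the kernel relations (ii)(a), the action relations (ii)(c), and the relator value (ii)(b), Johnson's method outputs exactly the asserted presentation.

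The step I expect to be the main obstacle is the bookkeeping in the action computation: one must verify that ${\cal B}$ is genuinely stable under the $\Z_q$-action, i.e.\ that the \emph{only} place where the action leaves ${\cal B}$ is the wrap-around slot $e_{j,0,p_j^{r_j}}\mapsto\big(A_{n_{j-1}+1,n_{j-1}+2}\big)^{q}$, and that this word is then correctly rewritten via $e_{j,0,1}^{\,q}\prod e_{j,0,k+1}^{-1}$; keeping track of the exponents, the signs, and the modular indexing simultaneously in all five cases of (ii)(c) is the delicate part. Everything else is a routine transcription of Johnson's recipe together with results already established above.
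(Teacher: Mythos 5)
Your proposal is correct and follows essentially the same route as the paper: Johnson's method applied to the extension $1\to\Z^{n(n-1)/2}\to\Gamma_q\to\Z_q\to 1$, with relation (ii)(b) obtained by expanding $\widehat{t}_j^{\,p_j^{r_j}}$ using $\delta_{n_{j-1},p_j^{r_j}}^{p_j^{r_j}}=1$ and relations (ii)(c) read off from the conjugation computation preceding \req{basisB}. The only point where you go beyond the paper is the treatment of the lifted commutator relators $[\widehat{t}_i,\widehat{t}_j]$; note that even when such a relator evaluates to the identity in $L$, Johnson's recipe still requires the relation $[\widehat{t}_i,\widehat{t}_j]=1$ to appear in the presentation (it asserts that the chosen lifts commute), so it is more accurate to say this relation is present but trivially satisfied than that it "contributes no relation".
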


\begin{proof}
The short exact sequence follows from the construction given in \reth{akthm}. The presentation was obtained applying the method of \cite[Chapter~10]{Johnson}. Relations $(ii)$ follows using the action by conjugacy:
\begin{align*}
\left(A_{n_{j-1}+1,n_{j-1}+2}\delta_{n_{j-1},p_{j}^{r_{j}}}\right)^{p_{j}^{r_{j}}} = \prod_{k=1}^{p_j^{r_j}} \delta_{n_{j-1},p_{j}^{r_{j}}}^{k-1} \left(A_{n_{j-1}+1,n_{j-1}+2}\right) \delta_{n_{j-1},p_{j}^{r_{j}}}^{-(k-1)}
=e_{j,0,1}
\end{align*}
The conjugacy part (relations $(iii)$) was described in the computations given before \req{basisB}.
\end{proof}

Next, we compute the rank and give a basis of the center of $\Gamma_{q}$.

\begin{thm}\label{th:centerzdk}
Let $\Gamma_{q}$ be the Bieberbach subgroup of $B_{n}/[P_{n},P_{n}]$ of dimension ${n}({n}-1)/2$, with holonomy group $\Z_{q}$. 
Then the center of the Bieberbach group $\Gamma_{q}$, $\mathcal{Z}(\Gamma_{q})$, is the free abelian group of rank $\displaystyle\sum_{i=1}^t\frac{p_i^{r_i}-1}{2} + \frac{t(t-1)}{2}$:
\begin{align*}
\mathcal{Z}(\Gamma_{q})&= 
\left(
\bigoplus_{1\leq j\leq t}\left( \Z[e_{j,0,1}] \right)
\right) \oplus
\left(
\bigoplus_{ \substack{1\leq j\leq t\\
1\leq h\leq \frac{p_j^{r_j}-1}{2}} }\left(\mathbb{Z}\left[ \prod_{1\leq k\leq p_j^{r_j}} e_{j,h,k} \right]\right)
\right)\\
&\oplus
\left(
\bigoplus_{ t+1\leq j<h \leq 2t }\left(\mathbb{Z}\left[ \prod_{1\leq k\leq p_j^{r_j}p_h^{r_h}} e_{j,h,k} \right]\right)
\right)
\end{align*}
where the elements $e_{*,*,*}$ belongs to the basis ${\cal B}$.
\end{thm}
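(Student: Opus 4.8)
The strategy is to identify $\mathcal{Z}(\Gamma_{q})$ with the sublattice of $L=\Z^{\frac{n(n-1)}{2}}$ fixed by the holonomy action, and then to read that sublattice off from the block decomposition of $[\psi_{q}(\theta)]_{{\cal B}}$ obtained in \reth{mainrep}. First I would record the standard reduction. Being a Bieberbach group, $\Gamma_{q}$ is crystallographic, so its holonomy representation $\psi_{q}\colon \Z_{q}\to \operatorname{Aut}(L)$ is faithful (\relem{DefC2}). If $g\in \mathcal{Z}(\Gamma_{q})$, then conjugation by $g$ is trivial on the normal subgroup $L$, hence $\psi_{q}(\overline{\sigma}(g))=\id$, and faithfulness forces $\overline{\sigma}(g)=1$, that is $g\in L$. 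Conversely, fixing $\gamma\in\Gamma_{q}$ with $\overline{\sigma}(\gamma)=\theta$ a generator of $\Z_{q}$, the group $\Gamma_{q}$ is generated by $L$ together with $\gamma$, so an element $v\in L$ is central if and only if $\gamma v\gamma^{-1}=v$, i.e. $\psi_{q}(\theta)(v)=v$. Therefore
\begin{equation*}
\mathcal{Z}(\Gamma_{q})=\set{v\in L}{\psi_{q}(\theta)(v)=v}=\operatorname{Ker}\bigl([\psi_{q}(\theta)]_{{\cal B}}-I\bigr)\cap\Z^{\frac{n(n-1)}{2}}.
\end{equation*}

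Next I would exploit the block-diagonal shape of $[\psi_{q}(\theta)]_{{\cal B}}$ in \req{matrixzpr}: the fixed sublattice is the direct sum of the fixed sublattices of the blocks ${\cal N}_{p_{j}^{r_{j}}}[q]$, ${\cal M}_{p_{j}^{r_{j}}}$ and ${\cal M}_{p_{j}^{r_{j}}p_{h}^{r_{h}}}$, so it is enough to treat each block in isolation. A block ${\cal M}_{z}$ acts on its $z$ basis vectors by a single $z$-cycle, so an integer vector supported there is fixed exactly when all its coordinates coincide; the fixed sublattice is the rank-one group generated by the sum of those $z$ basis vectors. For a block ${\cal N}_{z}[q]$ on basis vectors $f_{1},\ldots,f_{z}$, solving $({\cal N}_{z}[q]-I)c=0$ over $\Z$ yields $qc_{z}=0$ from the first row, hence $c_{z}=0$ since $q\neq 0$, and the remaining rows then force $c_{2}=\cdots=c_{z}=0$ in cascade while $c_{1}$ stays free; so the fixed sublattice of ${\cal N}_{z}[q]$ is $\Z f_{1}$. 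I expect this last computation to be the only delicate point: it is precisely the entry $q$ in the first row that prevents $f_{1}+\cdots+f_{z}$ from being fixed, so an ${\cal N}$-block contributes one central generator rather than two.

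Finally I would transport these rank-one generators back through the ordered basis ${\cal B}$ of \req{basisB} into the multiplicative notation of $B_{n}/[P_{n},P_{n}]$: the first basis vector of the block ${\cal N}_{p_{j}^{r_{j}}}[q]$ is $e_{j,0,1}$, giving the summands $\Z[e_{j,0,1}]$ for $1\leq j\leq t$; the block ${\cal M}_{p_{j}^{r_{j}}}$ with index $(j,h)$ gives $\prod_{1\leq k\leq p_{j}^{r_{j}}}e_{j,h,k}$; and the block ${\cal M}_{p_{j}^{r_{j}}p_{h}^{r_{h}}}$ gives $\prod_{1\leq k\leq p_{j}^{r_{j}}p_{h}^{r_{h}}}e_{j,h,k}$. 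Collecting the contributions of all blocks produces exactly the direct sum displayed in the statement, the listed generators being $\Z$-independent because each one lies in a distinct block of ${\cal B}$; counting them gives rank $t+\sum_{i=1}^{t}\frac{p_{i}^{r_{i}}-3}{2}+\binom{t}{2}=\sum_{i=1}^{t}\frac{p_{i}^{r_{i}}-1}{2}+\frac{t(t-1)}{2}$. Apart from the block-by-block linear algebra and carefully matching the indices of ${\cal B}$, no further obstacle arises.
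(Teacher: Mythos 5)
Your proposal is correct and follows essentially the same route as the paper: the paper invokes Lemma~5.2(3) of Szczepański's book for the identification $\mathcal{Z}(\Gamma_q)=(\Z^{\frac{n(n-1)}{2}})^{\Z_q}$ (which you instead prove directly from faithfulness of the holonomy representation) and then reads the fixed sublattice off the block decomposition of \reth{mainrep}. Your explicit check that an ${\cal N}_z[q]$-block contributes only $\Z f_1$ is in fact more careful than the paper's ``product over each orbit'' description, which is literally accurate only for the permutation blocks ${\cal M}_z$, and your count confirms the stated rank (note it forces the index range $1\leq h\leq \frac{p_j^{r_j}-3}{2}$ in the second summand of the displayed decomposition).
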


\begin{proof}
The expression of $\mathcal{Z}(\Gamma_{q})$ follows from  Lemma~5.2(3) of \cite{Sz} that asserts that the center of $\Gamma_{q}$ is $(\Z^{\frac{n(n-1)}{2}})^{\Z_{q}}$. That means that $\mathcal{Z}(\Gamma_{q})$ is the group given by the fixed elements of the action by conjugacy of $\Z_{q}$ over the group $\Z^{\frac{n(n-1)}{2}}$. Hence the rank of $\mathcal{Z}(\Gamma_{q})$ corresponds to the number of orbits of the action by conjugacy of $\Z_q$ on the basis ${\cal B}$ given in \req{basisB} and a basis of $\mathcal{Z}(\Gamma_{q})$ is obtained taking the product of all the elements in each orbit. 
The result follows analising the action described in the computations given before \req{basisB} (that is equivalent to analise the fixed elements of the matrix $\lbrack\psi_{q}(\theta)]_{{\cal B}}$ given in \req{matrixzpr}).
\end{proof}

\begin{rem}
Notice that we may quotient $\Gamma_q$ by subgroups of the center of $\Gamma_q$ that are invariant under the action by conjugacy obtaining crystallographic groups (not necessarily torsion free). 
In particular, if we consider the quotient of $\Gamma_q$ by its center we obtain the group of inner automorphisms of $\Gamma_q$ (see \cite[Theorem 5.1, page 65]{Sz}) $\Gamma_q/\mathcal{Z}(\Gamma_{q})=\operatorname{Inn}(\Gamma_q)$ that is a crystallographic group of dimension 
$$
\frac{n(n-1)}{2} - \operatorname{rank}(\mathcal{Z}(\Gamma_{q}))= \sum_{i=1}^t\frac{p_i^{r_i}(p_i^{r_i}-1)}{2} + \sum_{1\leq \ell<s\leq t}p_{\ell}^{r_{\ell}}p_s^{r_s} - \sum_{i=1}^t\frac{p_i^{r_i}-1}{2} - \frac{t(t-1)}{2}
$$
that fits into a short exact sequence 
$1 \to  \frac{\Z^{\frac{n(n-1)}{2}}}{\mathcal{Z}(\Gamma_{q})} \to  \frac{\Gamma_{q}}{\mathcal{Z}(\Gamma_{q})} \stackrel{\overline{\sigma}}{\to} \Z_{q} \to 1$.
\end{rem}

\subsection{$\Z_{p^r}$-manifolds with $p$ an odd prime number}\label{subsec:zpr}

Let $n=p^r$ where $p$ is an odd prime and $r\geq 1$. That means that $n$ is the odd number $q$ of the first paragraph of this section taking $t=1$. 
Let $\Z_{p^r}\subseteq \sn$ be the cyclic group generated by the permutation $\theta=(p^r, p^r-1,\ldots,3,2,1)\in \sn$.  
Let $\Gamma_{\Z_{p^r}}\leq B_{p^r}/[P_{p^r}, P_{p^r}]$ be the Bieberbach group constructed in \reth{akthm}. 
For short we just write $\Gamma_{p^r}$ to indicate the group $\Gamma_{\Z_{p^r}}$.
The Bieberbach group $\Gamma_{p^r}$ fits into a short exact sequence
$
1\to \Z^{\frac{p^r(p^r-1)}{2}} \to \Gamma_{p^r} \to \Z_{p^r} \to 1
$
and have holonomy representation
\begin{equation}\label{eq:repzpr}
\psi_{p^r}\colon \Z_{p^r}\to \operatorname{Aut}\left(\Z^{\frac{p^r(p^r-1)}{2}}\right)
\end{equation}
induced from the action by conjugacy of $\Z_{p^r}$ over the group $\Z^{\frac{p^r(p^r-1)}{2}}$.

We recall that a  K\"ahler manifold is a $2n$-real manifold endowed with Riemannian metric, complex structure, and a sympletic structure which are compatible at every point.	
	A finitely presented group is a  K\"ahler group if it is the fundamental group of a closed  K\"ahler manifold.
 For more details about K\"ahler manifolds see \cite{DeHaSz}, \cite{JR} and also \cite[Section~7]{Sz}. 
In the proof of \reth{kahler} we will apply an algebraic characterisation of K\"ahler manifolds (see \cite[Propositions~7.1~and~7.2]{Sz}) in order to decide which flat manifolds of even dimension arising from Artin braid groups admit K\"ahler geometry.

\begin{proof}[Proof of \reth{kahler}]
Let $p$ be an odd prime and let $r\geq 1$. 
Let $\mathcal{X}_{\Gamma_{p^r}}$ be the  flat manifold of dimension $\frac{p^r(p^r-1)}{2}$ with fundamental group 
$\Gamma_{p^r}\leq B_{p^r}/[P_{p^r}, P_{p^r}]$ and holonomy group $\Z_{p^r}$. 
We need to determine in which conditions the coefficients of the real irreps that appears in the representation of $\psi_{\Gamma_{p^r}}$ are even, and then apply \cite[Proposition~7.2]{Sz} to verify if 
the representation is essentially complex.
From \req{matrixzpr} we conclude that the character vector of $\psi_{p^r}$ is
\begin{equation*}
\overset{\rightarrow}{\chi}_{\psi_{p^r}}=\left(
\begin{array}
[c]{c}%
\frac{p^r\left(p^r-1\right)}{2} \\
0\\
\vdots\\
0\\
\vdots\\
0\\
\end{array}
\right). 
\end{equation*}
Then the holonomy representation $\psi_{p^r}$ has the property that each real irrep appears $\frac{p^r-1}{2}$ times in its decomposition.
Hence, the coefficients of the real irreps that appears in the representation of $\psi_{\Gamma_{p^r}}$ are even if and only if there is an integer $u\geq 1$ such that $p^r=4u+1$. 
Therefore, follows from \cite[Propositions~7.1~and~7.2]{Sz} that the representation given in \req{repzpr} is essentially complex and the flat manifold $\mathcal{X}_{\Gamma_{p^r}}$ is K\"ahler if and only if there is an integer $u\geq 1$ such that $p^r=4u+1$.
\end{proof}

In the last 20 years, after the solution of Calabi's conjecture by S.~T.~Yau, studies related to complex manifolds have grown. One of the most important classes are the so-called \textit{Calabi-Yau manifolds}, that is K\"ahler manifolds with holonomy group contained in $SU(n)$, see \cite[Section~7.2]{Sz} and \cite{DeHaSz}. 
Follows from \reth{mainzpr} that the flat manifold $\mathcal{X}_{\Gamma_{p^r}}$ of \reth{kahler} is orientable, hence we obtain families of Calabi-Yau manifolds with fundamental group a Bieberbach subgroup of the quotient of the Artin braid group $B_{p^r}/[P_{p^r},P_{p^r}]$.
\begin{cor}\label{cor:kahler}
Let $p$ be an odd prime such that $p^r=4u+1$ for some $u,r\geq1$. Then the flat manifold  $\mathcal{X}_{\Gamma_{p^r}}$ of \reth{kahler} is a Calabi-Yau flat manifold of dimension $2u(4u+1)$.
\end{cor}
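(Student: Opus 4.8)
The plan is to verify the two defining properties of a Calabi-Yau flat manifold separately: that $\mathcal{X}_{\Gamma_{p^r}}$ is K\"ahler, which is exactly the content of \reth{kahler} under the hypothesis $p^r=4u+1$, and that its holonomy group is carried into the special unitary group $\operatorname{SU}(m)$, where $m$ is the complex dimension. The dimension count is routine: substituting $p^r=4u+1$ gives $\frac{p^r(p^r-1)}{2}=\frac{(4u+1)(4u)}{2}=2u(4u+1)$, so the real dimension is $2u(4u+1)$ and the complex dimension is $m=u(4u+1)$. Orientability is already furnished by \reth{mainzpr}\,(1), which records only the necessary condition that the holonomy lie in $\operatorname{SO}$; the genuinely new point is upgrading $\operatorname{SO}$ to $\operatorname{SU}$, equivalently showing that, once an invariant complex structure is fixed, the complex determinant of $\psi_{p^r}(\theta)$ equals $1$.

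First I would read off the eigenvalue data from \reth{mainrep}. For $t=1$ the characteristic polynomial of $[\psi_{p^r}(\theta)]_{\cal B}$ is $(x^{p^r}-1)^{(p^r-1)/2}$, so on the complexified lattice the generator $\theta$ acts with eigenvalues the $p^r$-th roots of unity, each of multiplicity $\frac{p^r-1}{2}=2u$. Because $p^r$ is odd, the only self-conjugate eigenvalue is $1$, and the remaining ones split into the conjugate pairs $\{\zeta^k,\zeta^{-k}\}$, $1\leq k\leq 2u$, with $\zeta=e^{2\pi i/p^r}$. Over $\R$ this means the holonomy representation is $2u$ copies of the regular representation of $\Z_{p^r}$, that is, the trivial irrep with multiplicity $2u$ together with each two-dimensional irrep $W_k$, on which $\theta$ acts as rotation through $2\pi k/p^r$, again with multiplicity $2u$.

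The decisive step is to choose the holonomy-invariant complex structure $J$ in a \emph{balanced} way, which is possible precisely because every multiplicity equals the even number $2u$ --- the very parity condition that produced the K\"ahler structure in \reth{kahler}. Concretely, I would pair the $2u$ trivial summands into $u$ complex lines (on which $\theta$ acts as $1$), and for each $k$ equip $u$ of the $2u$ copies of $W_k$ with the complex structure on which $\theta$ acts as $\zeta^{k}$ and the remaining $u$ copies with the conjugate one on which $\theta$ acts as $\zeta^{-k}$. Each such $J$ commutes with $\psi_{p^r}(\theta)$ and is orthogonal for the flat metric, so it is a genuine invariant K\"ahler complex structure. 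With this choice the eigenvalues of $\theta$ on $(V,J)$ are $1$ with multiplicity $u$ and, for each $k$, $\zeta^{k}$ and $\zeta^{-k}$ each with multiplicity $u$; hence
\begin{equation*}
\det\nolimits_{\C}\bigl(\psi_{p^r}(\theta)\bigr)=1^{\,u}\cdot\prod_{k=1}^{2u}\bigl(\zeta^{k}\bigr)^{u}\bigl(\zeta^{-k}\bigr)^{u}=\prod_{k=1}^{2u}\zeta^{\,ku-ku}=1.
\end{equation*}
Since $\theta$ generates $\Z_{p^r}$, the whole holonomy group lands in $\operatorname{SU}\bigl(u(4u+1)\bigr)$, so combining this with the K\"ahler property of \reth{kahler} shows that $\mathcal{X}_{\Gamma_{p^r}}$ is Calabi-Yau of dimension $2u(4u+1)$ (see \cite[Section~7.2]{Sz}).

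The main obstacle is the legitimacy of the balanced choice of $J$: I must check that an invariant, metric-compatible complex structure realising this eigenvalue splitting exists within the family underlying the K\"ahler datum of \reth{kahler}, and that ``Calabi-Yau'' in the cited sense is exactly the vanishing of the complex holonomy determinant. Both points hinge on the fact that $\frac{p^r-1}{2}=2u$ is even: this evenness is simultaneously what lets each real irrep carry an invariant complex structure (giving the K\"ahler structure) and what lets the conjugate eigenvalue pairs be balanced (giving the trivial complex determinant). The balancing is genuinely necessary --- if one instead equipped all $2u$ copies of every $W_k$ with the same complex structure, a short computation modulo $p^r=4u+1$ gives $\det\nolimits_{\C}(\psi_{p^r}(\theta))=\zeta^{u^2}\neq 1$ --- so this is the step at which the argument must be made fully precise.
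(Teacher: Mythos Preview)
Your argument is correct and substantially more explicit than the paper's. The paper does not give a separate proof of the corollary; it is deduced in the sentence immediately preceding its statement by combining the K\"ahler property from \reth{kahler} with the orientability established in \reth{mainzpr}(a), and pointing to \cite[Section~7.2]{Sz} and \cite{DeHaSz} for the notion of Calabi--Yau flat manifold. No direct computation of the complex holonomy determinant is carried out there.

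You instead extract the eigenvalue data from \reth{mainrep}, note that each $p^r$-th root of unity occurs with the even multiplicity $2u$, and construct a balanced invariant complex structure pairing conjugate eigenvalues so that $\det_{\C}\bigl(\psi_{p^r}(\theta)\bigr)=1$ visibly. This is self-contained and makes transparent why the congruence $p^r\equiv 1\pmod 4$ is exactly what is needed: the same evenness of $\tfrac{p^r-1}{2}$ that produces the K\"ahler structure also permits the balancing. Your closing remark that an unbalanced choice would yield $\det_{\C}=\zeta^{u^2}\neq 1$ correctly shows the construction is not automatic. The paper's route buys brevity by outsourcing the verification to the cited references; yours buys a concrete, independent check that the holonomy lands in $\operatorname{SU}\bigl(u(4u+1)\bigr)$.
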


\end{document}